\newtheorem{theorem}{Theorem}
\newtheorem{lemma}[theorem]{Lemma}
\newtheorem{proposition}[theorem]{Proposition}
\newtheorem{remark}[theorem]{Remark}
\newenvironment{proof}[1][Proof]{\noindent\textbf{#1.} }{\ \rule{0.5em}{0.5em}}
\begin{document}

\title{\textbf{Equivariant holonomy of U(1)-bundles}}
\date{}
\author{\textsc{Roberto Ferreiro P\'{e}rez} \\
Departamento de Econom\'{\i}a Financiera y Actuarial y Estad\'{\i}stica\\
Facultad de Ciencias Econ\'omicas y Empresariales, UCM\\
Campus de Somosaguas, 28223-Pozuelo de Alarc\'on, Spain\\
\emph{E-mail:} \texttt{roferreiro@ccee.ucm.es}}
\maketitle

\begin{abstract}
We define the equivariant holonomy of an invariant connection on a principal 
$U(1)$-bundle. The properties of the ordinary holonomy are generalized to
the equivariant setting. In particular, equivariant $U(1)$-bundles with
connection are shown to be classified by its equivariant holonomy modulo
isomorphisms. We also show that the equivariant holonomy can be used to
obtain results about equivariant prequantization and anomaly cancellation.
\end{abstract}

\noindent \emph{Mathematics Subject Classification 2010:\/ }53C29; 53D50,
55N91, 81Q70.

\smallskip

\noindent\emph{Key words and phrases:\/ }equivariant holonomy, equivariant
prequantization.

\medskip

\noindent \emph{Acknowledgments:\/} Supported by Ministerio de Ciencia,
Innovaci\'{o}n y Universidades of Spain under grant PGC2018-098321-B-I00.

\section{Introduction}

When a group $G$ acts on a principal bundle $P\rightarrow M$, we can
consider the $G$-equivariant version of almost any mathematical
construction. For example we have $G$-equivariant\ homology and cohomology (%
\cite{GS}), $G$-equivariant characteristic classes (\cite{BV2}), $G$%
-equivariant differential cohomology (\cite{Gomi},\cite{kubel}), etc.
However, in our study of anomaly cancellation in \cite{AnomaliesG} we needed
to consider the $G$-equivariant holonomy of a $G$-invariant\ connection on a 
$U(1)$-bundle. Although there is a natural definition of this concept, we
have been unable\ to find a detailed study of it\ in the literature (and we
are not the only ones to face this problem, e.g. see \cite[page 8]{EquiChern}%
). In \cite{AnomaliesG} the $G$-equivariant holonomy is studied in the
particular case of topologically trivial $U(1)$-bundles over a contractible
space. The reason is that this is the case that appears in the study of
gravitational anomaly cancellation (see Section \ref{anomalies} for more
details).

In the present paper we study the $G$-equivariant holonomy of a $G$%
-invariant\ connection $\Xi$ on an arbitrary $G$-equivariant $U(1)$-bundle $%
p\colon P\rightarrow M$. We show that the basic properties of ordinary
holonomy can be extended to $G$-equivariant holonomy. In particular, we
prove that the $G$-equivariant holonomy\ classifies $G$-equivariant $U(1)$%
-bundles with connection modulo isomorphisms. We give an application of the
equivariant holonomy to study the existence of equivariant prequantization
bundles. Finally we study the relation of equivariant holonomy with the
study of anomaly cancellation.

To motivate our definition of equivariant holonomy, let us consider the case
of a free and proper action of a discrete group $G$ on a $U(1)$-bundle $%
p\colon P\rightarrow M$. If $\Xi$ is a $G$-invariant connection on $%
P\rightarrow M$, then it projects onto a connection $\underline{\Xi}$ on the
quotient bundle $P/G\rightarrow M/G$. We want to compute the holonomy of $%
\underline{\Xi}$ in terms of $\Xi$. A loop $\underline{\gamma}$ on $M/G$
based at $[x]\in M/G$ can be represented by a curve $\gamma\colon
\lbrack0,1]\rightarrow$ $M$ such that $\gamma(0)=x$ and $\gamma(1)=\phi
_{M}(x)$ for an element $\phi\in G$. If $y\in p^{-1}(x)\subset P$, we can
consider the $\Xi$-horizontal lift $\overline{\gamma}^{y}\colon I\rightarrow
P$ with $\overline{\gamma}^{y}(0)=y$. The problem is that $\overline{\gamma }%
^{y}(0)$ and $\overline{\gamma}^{y}(1)$ belong to different fibers, and
hence we cannot compare them in order to define the ordinary holonomy.
However, $\overline{\gamma}^{y}(1)$ and $\phi_{P}(y)$ are in the same fiber,
and hence they can be related. The $\phi$-equivariant holonomy $\mathrm{Hol}%
_{\phi}^{\Xi}(\gamma)\in U(1)$\ of $\gamma$ is characterized by the property 
$\overline{\gamma}^{y}(1)=\phi_{P}(y)\cdot\mathrm{Hol}_{\phi}^{\Xi}(\gamma)$%
. Moreover, we prove that we have $\mathrm{Hol}_{\phi}^{\Xi}(\gamma )=%
\mathrm{Hol}^{\underline{\Xi}}(\underline{\gamma})$.

For the action of an arbitrary Lie group $G$ on $P\rightarrow M$, we define
the equivariant holonomy\ in a similar way. As it is well known, the
curvature of a connection $\mathrm{curv}(\Xi)$ measures the infinitesimal
holonomy of $\Xi$. In the equivariant case, we have the equivariant
curvature $\mathrm{curv}_{G}(\Xi)=\mathrm{curv}(\Xi)+\mu^{\Xi}$ (e.g. see 
\cite{BV2}), where $\mu^{\Xi}\colon\mathfrak{g}\rightarrow\mathbb{R}$ is
called the momentum of $\Xi$. We show that $\mu^{\Xi}$ measures the
infinitesimal variation of the equivariant holonomy $\mathrm{Hol}%
_{\phi}^{\Xi}(\gamma)$ with respect to $\phi\in G$.

\bigskip

The results of this paper can be generalized in several ways. For example,
the equivariant holonomy can be defined for connections on principal bundles
with arbitrary structural group. We left the study of the general case for a
separate paper.

Another possible extension is the following. In \cite{Cheeger} the concept
of differential character is introduced as an object similar to the holonomy
of a connection. In the equivariant case, we can define an equivariant
differential character (of second order) as a map $\chi $ which assigns a
complex number $\chi (\phi ,\gamma )\in U(1)$ to a pair $(\phi ,\gamma )$,
with $\phi \in G$ and $\gamma \colon \lbrack 0,1]\rightarrow $ $M$ such that 
$\gamma (1)=\phi _{M}(\gamma (0))$. This concept of equivariant differential
character has been introduced in \cite{LermanMalkin}\ in order to classify
equivariant $U(1)$-bundles modulo isomorphisms. Furthermore, it can be seen
that there are important geometrical objects that appear in a natural way as
equivariant differential characters not necessarily associated to an
invariant connection. One example is the definition of the Chern-Simons line
bundle (see \cite{CSconnections}) and another example is Witten's formula
for global anomalies (see Section \ref{anomalies}).

\section{Holonomy on $U(1)$-bundles\label{holonomy}}

In this section we recall some classical results about the holonomy of
connections on $U(1)$-bundles (e.g. see \cite{Brylinsky}, \cite{Kostant} for
more details). In the rest of the paper we show how to extend these results
to the equivariant setting.

The interval $[0,1]$ is denoted by $I$. The space of loops on $M$ is defined
by $\mathcal{C}(M)=\{\gamma \colon I\rightarrow M$ $|$ $\gamma $ is
piecewise $\mathcal{C}^{1}$ and $\gamma (1)=\gamma (0)\}$, and the space of
loops based at $x$ is defined by $\mathcal{C}_{x}(M)=\{\gamma \in \mathcal{C}%
(M)$ $|$ $\gamma (0)=x\}$. The holonomy can be defined for connections on
principal bundles with arbitrary structural group (e.g. see \cite{KN1}). Let 
$K$ be a Lie group, $p\colon P\rightarrow M$ a principal $K$-bundle, and let 
$\Xi $ be a connection on $P\rightarrow M$. If $\gamma \in \mathcal{C}%
_{x}(M) $ and $y\in p^{-1}(x)$, we have a $\Xi $-horizontal lift $\overline{%
\gamma }^{y}\colon I\rightarrow P$ with $\overline{\gamma }^{y}(0)=y$.
Furthermore, if $\gamma \in \mathcal{C}_{x}(M)$, then we have $p(\overline{%
\gamma }^{y}(1))=p(\overline{\gamma }^{y}(0))=x$ and hence there exist 
\textrm{Hol}$^{\Xi ,y}(\gamma )\in K$ such that $\overline{\gamma }^{y}(1)=%
\overline{\gamma }^{y}(0)\cdot $\textrm{Hol}$^{\Xi ,y}(\gamma )$. If $K$ is
abelian then it can be seen that \textrm{Hol}$^{\Xi ,y}(\gamma )$ is
independent of the element $y\in \pi ^{-1}(x)$ chosen and it is denoted
simply by \textrm{Hol}$^{\Xi }(\gamma )$.

Two curves $\gamma _{1}$ and $\gamma _{2}$ are said to differ by a
reparametrization if there exists an orientation preserving homeomorphism $%
\varphi \colon I\rightarrow I$ such that $\varphi $ and $\varphi ^{-1}$ are
piecewise $\mathcal{C}^{1}$ and $\gamma _{2}=\gamma _{1}\circ \varphi $. In
this case we have $\overline{\gamma _{2}}^{y}(1)=\overline{\gamma }%
_{1}^{y}(1)$ (e.g. see \cite{KN1}). Hence if $\gamma _{1},\gamma _{2}\in 
\mathcal{C}_{x}(M)$ differ by a reparametrization then \textrm{Hol}$^{\Xi
}(\gamma _{1})=$\textrm{Hol}$^{\Xi }(\gamma _{2}).$

Now we consider the case $K=U(1)$. If $\gamma\colon I\rightarrow M$ is a
curve, we define the inverse curve $\overleftarrow{\gamma}\colon
I\rightarrow M$ by $\overleftarrow{\gamma}(t)=\gamma(1-t)$. If $\gamma\in%
\mathcal{C}_{x}(M)$ then we have \textrm{Hol}$^{\Xi}(\overleftarrow{\gamma})=%
\mathrm{Hol}^{\Xi}(\gamma)^{-1}$. Moreover, if $\gamma_{1}$ and $\gamma _{2}%
\mathcal{\ }$are curves with $\gamma_{1}(1)=\gamma_{2}(0)$ we define $%
\gamma_{1}\ast\gamma_{2}$ by $\gamma_{1}\ast\gamma_{2}\colon I\rightarrow 
\mathbb{R}$, $\gamma_{1}\ast\gamma_{2}(t)=\gamma_{1}(2t)$ for $t\in
\lbrack0,1/2]$ and $\gamma_{1}\ast\gamma_{2}(t)=\gamma_{2}(2t-1)$ for $%
t\in\lbrack1/2,1]$. If $\gamma_{1},\gamma_{2}\in\mathcal{C}_{x}(M)$ then we
have $\gamma_{1}\ast\gamma_{2}\in\mathcal{C}_{x}(M)$ and \textrm{Hol}$^{\Xi
}(\gamma_{1}\ast\gamma_{2})=$\textrm{Hol}$^{\Xi}(\gamma_{1})\cdot \mathrm{Hol%
}^{\Xi}(\gamma_{2})$.

The connection is a form $\Xi\in\Omega^{1}(P,i\mathbb{R})$ and the curvature
form$\ \mathrm{curv}(\Xi)\in\Omega^{2}(M)$ is defined by the property $%
p^{\ast}(\mathrm{curv}(\Xi))=\frac{i}{2\pi}d\Xi$. As it is well known, for
bundles with arbitrary group the curvature of $\Xi$\ measures the
infinitesimal holonomy. For $U(1)$-bundles we have a more precise result
that is a generalization of the classical Gauss-Bonnet Theorem

\begin{proposition}
\label{HolonomyInt}If $\Sigma\subset M$ is a $2$-dimensional submanifold
with boundary $\partial\Sigma=\bigcup\limits_{i=1}^{k}\gamma_{i}$, with $%
\gamma _{i}\in\mathcal{C}(M)$ then\ we have $\prod\limits_{i=1}^{k}\mathrm{%
Hol}^{\Xi }(\gamma_{i})=\exp(2\pi i\int_{\Sigma}\mathrm{curv}(\Xi))$.
\end{proposition}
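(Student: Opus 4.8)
The plan is to reduce the statement to Stokes' theorem by trivializing the bundle over $\Sigma$ and rewriting the holonomy of each $\gamma_i$ as the line integral of a pullback of the connection. First I would observe that, $\Sigma$ being a compact surface with nonempty boundary, it is homotopy equivalent to a $1$-complex, so $H^2(\Sigma;\mathbb{Z})=0$ and the restriction of $p\colon P\to M$ to $\Sigma$ is trivial; hence there exists a global section $s\colon\Sigma\to P$. Setting $A=s^{\ast}\Xi\in\Omega^1(\Sigma,i\mathbb{R})$ and using the defining relation $p^{\ast}(\mathrm{curv}(\Xi))=\tfrac{i}{2\pi}d\Xi$ together with $p\circ s=\mathrm{id}$, pulling back by $s$ gives $dA=s^{\ast}(d\Xi)=-2\pi i\,\mathrm{curv}(\Xi)|_{\Sigma}$.

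The next step is to compute $\mathrm{Hol}^{\Xi}(\gamma_i)$ in terms of $A$. Writing the horizontal lift of a loop $\gamma$ as $\overline{\gamma}(t)=s(\gamma(t))\cdot g(t)$ with $g\colon I\to U(1)$ and $g(0)=1$, and imposing $\Xi(\dot{\overline{\gamma}})=0$, the invariance $R_a^{\ast}\Xi=\Xi$ (automatic since $U(1)$ is abelian) together with the normalization that $\Xi$ sends a fundamental vertical vector field to its generator yields the scalar equation $\dot{g}\,g^{-1}=-A(\dot{\gamma})$. Its solution is $g(1)=\exp\bigl(-\int_{\gamma}A\bigr)$, and since $\gamma(1)=\gamma(0)$ this gives $\overline{\gamma}(1)=\overline{\gamma}(0)\cdot\exp\bigl(-\int_{\gamma}A\bigr)$, i.e. $\mathrm{Hol}^{\Xi}(\gamma)=\exp\bigl(-\int_{\gamma}A\bigr)$, using that for $U(1)$ the holonomy is independent of the chosen point of the fiber.

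Finally I would assemble the pieces. Orienting each $\gamma_i$ by the boundary orientation induced from $\Sigma$, Stokes' theorem gives $\sum_i\int_{\gamma_i}A=\int_{\partial\Sigma}A=\int_{\Sigma}dA$, and therefore
\[
\prod_{i=1}^{k}\mathrm{Hol}^{\Xi}(\gamma_i)=\exp\Big(-\sum_{i=1}^{k}\int_{\gamma_i}A\Big)=\exp\Big(-\int_{\Sigma}dA\Big)=\exp\Big(2\pi i\int_{\Sigma}\mathrm{curv}(\Xi)\Big),
\]
where the last equality uses $dA=-2\pi i\,\mathrm{curv}(\Xi)|_{\Sigma}$ from the first step. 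This is the claimed identity.

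I expect the main obstacle to be essentially bookkeeping: fixing the constants and the sign in the chain $d\Xi\rightsquigarrow A\rightsquigarrow\mathrm{curv}(\Xi)$, and making sure the parametrizations of the $\gamma_i$ agree with the induced boundary orientation of $\Sigma$ so that Stokes applies with no extra sign. A secondary point to state carefully is the existence of the global section $s$; if one prefers not to invoke the triviality of $P|_{\Sigma}$ directly, the identical computation can be carried out over the $2$-simplices of a triangulation of $\Sigma$ fine enough that each simplex lies in a trivializing neighbourhood, with the contributions of interior edges cancelling in pairs and only the boundary loops $\gamma_i$ surviving.
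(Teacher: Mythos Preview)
Your argument is correct and is the standard proof of this classical fact: trivialize $P$ over the surface with boundary, express each boundary holonomy as $\exp(-\int_{\gamma_i}s^{\ast}\Xi)$, and apply Stokes together with $d(s^{\ast}\Xi)=-2\pi i\,\mathrm{curv}(\Xi)|_{\Sigma}$. The sign and constant bookkeeping you flag is indeed the only delicate point, and you have it right given the paper's convention $p^{\ast}\mathrm{curv}(\Xi)=\tfrac{i}{2\pi}d\Xi$.

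As for comparison: the paper does not prove this proposition at all. It is placed in Section~\ref{holonomy}, which explicitly recalls classical results about ordinary $U(1)$-holonomy (with references to \cite{Brylinsky} and \cite{Kostant}), and the statement is presented as a known generalization of Gauss--Bonnet without argument. So there is no approach in the paper to compare yours against; you have simply supplied the standard proof that the author chose to cite rather than include. Your alternative via a fine triangulation is also valid and is essentially how one would handle the closed case ($\partial\Sigma=\emptyset$), where global triviality of $P|_{\Sigma}$ may fail and the identity reduces to integrality of the first Chern class.
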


We also have the following

\begin{proposition}
\label{PropChangeConn copy(1)}If $\Xi$ and $\Xi^{\prime}$ are connections on 
$P\rightarrow M$, then we have $\Xi^{\prime}=\Xi-2\pi i(p^{\ast}\rho)$ for a
form $\rho\in\Omega^{1}(M)$ and $\mathrm{Hol}^{\Xi^{\prime}}(\gamma )=%
\mathrm{Hol}^{\Xi}(\gamma)\cdot\exp(2\pi i\int_{\gamma}\rho)$ for any $%
\gamma\in\mathcal{C}(M)$.
\end{proposition}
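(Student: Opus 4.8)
The plan is to handle the two assertions in turn, the affine-difference statement first and then the holonomy formula. For the first part, I would recall that the space of connections on a principal $U(1)$-bundle is an affine space modelled on $\Omega^1(M,i\mathbb{R})$: the difference $\Xi'-\Xi$ of two connection forms is a horizontal, $U(1)$-invariant $1$-form on $P$ with values in $i\mathbb{R}$. Because $U(1)$ is abelian the adjoint action on its Lie algebra is trivial, so this difference is basic and descends to a form on $M$; that is, $\Xi'-\Xi=p^*\alpha$ for a unique $\alpha\in\Omega^1(M,i\mathbb{R})$. Writing $\alpha=-2\pi i\rho$ with $\rho\in\Omega^1(M)$ real-valued then gives $\Xi'=\Xi-2\pi i(p^*\rho)$, as claimed.

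For the holonomy formula, fix $\gamma\in\mathcal{C}_x(M)$ and $y\in p^{-1}(x)$, let $\overline{\gamma}^y$ be the $\Xi$-horizontal lift with $\overline{\gamma}^y(0)=y$, and let $\delta\colon I\rightarrow P$ be the $\Xi'$-horizontal lift with $\delta(0)=y$. The key idea is to look for $\delta$ in the form $\delta(t)=\overline{\gamma}^y(t)\cdot g(t)$ with $g\colon I\rightarrow U(1)$ and $g(0)=1$, and to solve for $g$. Differentiating the right action and applying $\Xi'$, I would use two connection axioms: that $\Xi'$ reproduces the Lie-algebra element on fundamental vector fields, and that $R_g^*\Xi'=\Xi'$ (again because the adjoint action is trivial for $U(1)$). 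These yield $\Xi'(\dot{\delta})=\Xi'(\dot{\overline{\gamma}^y})+g^{-1}\dot{g}$. Substituting $\Xi'=\Xi-2\pi i(p^*\rho)$, using $\Xi$-horizontality of $\overline{\gamma}^y$ to kill the $\Xi$-term, and rewriting $(p^*\rho)(\dot{\overline{\gamma}^y})=\rho(\dot{\gamma})$ via $p_*\dot{\overline{\gamma}^y}=\dot{\gamma}$, the horizontality condition $\Xi'(\dot{\delta})=0$ becomes the scalar ODE $g^{-1}\dot{g}=2\pi i\,\rho(\dot{\gamma})$.

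Integrating this ODE from $0$ to $t$ with $g(0)=1$ gives $g(t)=\exp(2\pi i\int_0^t\rho(\dot{\gamma}(s))\,ds)$, hence $g(1)=\exp(2\pi i\int_\gamma\rho)$. Since $\gamma$ is a loop, $\overline{\gamma}^y(1)=y\cdot\mathrm{Hol}^\Xi(\gamma)$, so $\delta(1)=y\cdot\mathrm{Hol}^\Xi(\gamma)\cdot g(1)$; comparing with $\delta(1)=y\cdot\mathrm{Hol}^{\Xi'}(\gamma)$ and cancelling $y$ yields the stated formula. I expect the main obstacle to be the careful bookkeeping in the previous paragraph — correctly splitting $\dot{\delta}$ into the push-forward of $\dot{\overline{\gamma}^y}$ under $R_{g(t)}$ plus the fundamental vector field generated by $g^{-1}\dot{g}$, and verifying that the equivariance $R_g^*\Xi'=\Xi'$ is exactly what makes the push-forward term collapse to $\Xi'(\dot{\overline{\gamma}^y})$ independently of $g$. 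Everything downstream is a one-dimensional integration that works precisely because $U(1)$ is abelian and the relevant ODE decouples.
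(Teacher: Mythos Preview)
Your proposal is correct and matches the paper's approach: the paper states this proposition as a classical fact without proof, but for the equivariant analogue (Proposition~\ref{PropChangeConn}) it gives precisely your argument in compressed form, namely that if $\overline{\gamma}$ is a $\Xi$-horizontal lift then $\overline{\gamma}(s)\cdot\exp\bigl(2\pi i\int_0^s\rho_{\gamma(s)}(\dot{\gamma}(s))\,ds\bigr)$ is a $\Xi'$-horizontal lift. Your write-up simply unpacks the verification of this fact via the connection axioms and the ODE for $g$, which is exactly the computation the paper leaves implicit.
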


If $p\colon P\rightarrow M$, $p^{\prime }\colon P^{\prime }\rightarrow M$
are two principal $U(1)$-bundles, we write that $P\simeq P^{\prime }$\ if
there exists a $U(1)$-bundle isomorphism $\Phi \colon P^{\prime }\rightarrow
P$ covering the identity map of $M$. And $P\rightarrow M$ is a trivial $U(1)$%
-bundle if $P\simeq M\times U(1)$. A $U(1)$-bundle with connection is a pair 
$(P,\Xi )$, where $p\colon P\rightarrow M$ is a $U(1)$-bundle and $\Xi $ is
a connection on $P$. We write that $(P,\Xi )\simeq (P^{\prime },\Xi ^{\prime
})$ if there exists a $U(1)$-bundle isomorphism $\Phi \colon P^{\prime
}\rightarrow P$ covering the identity map of $M$ such that\ $\Phi ^{\ast
}\Xi =\Xi ^{\prime }$.

The holonomy can be used to classify $U(1)$-bundles modulo isomorphisms.
Precisely, we recall the following classical result (e.g. see \cite[Theorem
2.5.1]{Kostant})

\begin{theorem}
\label{Clas} If $(P,\Xi)$ and $(P^{\prime},\Xi^{\prime})$ are $U(1)$-bundles
with connection over $M$, then $(P,\Xi)\simeq(P^{\prime},\Xi^{\prime})$ if
and only if $\mathrm{Hol}^{\Xi}(\gamma)=\mathrm{Hol}^{\Xi^{\prime}}(\gamma)$
for any $\gamma\in\mathcal{C}(M)$.
\end{theorem}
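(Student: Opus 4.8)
The plan is to prove the two directions of the biconditional separately. The forward direction is the easy one: if $(P,\Xi)\simeq(P^{\prime},\Xi^{\prime})$ via a bundle isomorphism $\Phi\colon P^{\prime}\rightarrow P$ with $\Phi^{\ast}\Xi=\Xi^{\prime}$, then $\Phi$ carries $\Xi^{\prime}$-horizontal lifts to $\Xi$-horizontal lifts. Given $\gamma\in\mathcal{C}_{x}(M)$ and $y^{\prime}\in (p^{\prime})^{-1}(x)$, the lift $\overline{\gamma}^{y^{\prime}}$ in $P^{\prime}$ pushes forward under $\Phi$ to the $\Xi$-horizontal lift in $P$ starting at $\Phi(y^{\prime})$, because $\Phi$ covers the identity (so it sends fibers to fibers over the same point) and preserves the horizontal distributions (as $\Phi^{\ast}\Xi=\Xi^{\prime}$). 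Applying $\Phi$ to the defining equation $\overline{\gamma}^{y^{\prime}}(1)=y^{\prime}\cdot\mathrm{Hol}^{\Xi^{\prime}}(\gamma)$ and using that $\Phi$ is $U(1)$-equivariant yields $\mathrm{Hol}^{\Xi}(\gamma)=\mathrm{Hol}^{\Xi^{\prime}}(\gamma)$ at once.

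For the converse, the strategy is to use the equality of holonomies to construct the isomorphism $\Phi$ explicitly. The key observation is that once we fix basepoints and a reference path, parallel transport determines a point of $P$ from a point of $P^{\prime}$, and the hypothesis on holonomies is exactly what guarantees this assignment is well-defined (path-independent). Concretely, I would first reduce to a connected $M$ and fix a basepoint $x_0$ and points $y_0\in p^{-1}(x_0)$, $y_0^{\prime}\in (p^{\prime})^{-1}(x_0)$. For an arbitrary $x\in M$ choose a path $\sigma$ from $x_0$ to $x$; using the $\Xi$- and $\Xi^{\prime}$-horizontal lifts of $\sigma$ starting at $y_0$ and $y_0^{\prime}$ respectively, I define $\Phi$ on the fiber over $x$ so that it sends the endpoint of the $\Xi^{\prime}$-lift to the endpoint of the $\Xi$-lift, and extend $U(1)$-equivariantly. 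This $\Phi$ will automatically cover the identity and be $U(1)$-equivariant by construction.

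The main obstacle, and the heart of the argument, is showing that $\Phi$ is well-defined, i.e. independent of the chosen path $\sigma$. If $\sigma_1$ and $\sigma_2$ are two paths from $x_0$ to $x$, then $\sigma_1\ast\overleftarrow{\sigma_2}$ is a loop at $x_0$, and the discrepancy between the two resulting endpoints in each bundle is precisely the holonomy of this loop. By the hypothesis these two holonomies coincide, so the discrepancies cancel and $\Phi$ is well-defined; here I would invoke the established multiplicativity $\mathrm{Hol}^{\Xi}(\gamma_1\ast\gamma_2)=\mathrm{Hol}^{\Xi}(\gamma_1)\cdot\mathrm{Hol}^{\Xi}(\gamma_2)$ and the inverse rule $\mathrm{Hol}^{\Xi}(\overleftarrow{\gamma})=\mathrm{Hol}^{\Xi}(\gamma)^{-1}$ recalled earlier in the section. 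A second point requiring care is smoothness of $\Phi$: well-definedness only gives a bundle map as a set-theoretic fiberwise construction, so I would check local smoothness by working in a contractible neighborhood where both bundles trivialize and the horizontal-lift construction depends smoothly on parameters.

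Finally, once $\Phi$ is shown to be a smooth $U(1)$-bundle isomorphism covering the identity, it remains to verify $\Phi^{\ast}\Xi=\Xi^{\prime}$. By construction $\Phi$ sends $\Xi^{\prime}$-horizontal lifts of every path through $x_0$ to $\Xi$-horizontal lifts, hence it maps the horizontal distribution of $\Xi^{\prime}$ to that of $\Xi$; combined with $U(1)$-equivariance this forces $\Phi^{\ast}\Xi=\Xi^{\prime}$, since a connection form on a $U(1)$-bundle is determined by its horizontal distribution together with its normalization on the fundamental vector field, both of which $\Phi$ respects. I expect the path-independence step to be where essentially all the content lies, with the rest being verification of naturality and regularity.
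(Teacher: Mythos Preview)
Your argument is correct and is essentially the standard proof of this classical result. Note, however, that the paper does not give its own proof of this theorem: it is stated in Section~\ref{holonomy} as a known fact with a reference to Kostant \cite[Theorem 2.5.1]{Kostant}, and is only used later (in the proof of Theorem~\ref{IsoEqui}) as a black box to obtain the underlying bundle isomorphism before checking $G$-equivariance. So there is no in-paper proof to compare against; what you have written is precisely the argument one finds in the cited source.
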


A consequence of the preceding theorem and Proposition \ref{PropChangeConn
copy(1)} is the following

\begin{proposition}
Let $(P,\Xi)$ be a $U(1)$-bundle with connection over $M$. Then $%
P\rightarrow M$ is a trivial $U(1)$-bundle if and only if there exists a $1$%
-form $\beta \in\Omega^{1}(M)$ such that $\mathrm{Hol}^{\Xi}(\gamma)=\exp(2%
\pi i\int_{\gamma}\beta)$ for any $\gamma\in\mathcal{C}(M)$.
\end{proposition}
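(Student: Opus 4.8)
The plan is to use the trivial bundle $M\times U(1)$ equipped with its canonical flat connection $\Xi_{0}$ as a reference model, and to transport everything through the classification Theorem \ref{Clas}. The key elementary observation is that $\mathrm{Hol}^{\Xi_{0}}(\gamma)=1$ for every $\gamma\in\mathcal{C}(M)$: the $\Xi_{0}$-horizontal lift of a loop $\gamma$ starting at $(\gamma(0),u)$ is simply $t\mapsto(\gamma(t),u)$, which closes up since $\gamma$ is a loop, so the end point equals the start point and the holonomy is the identity. With this in hand, both implications reduce to comparing a connection on the trivial bundle with $\Xi_{0}$ via Proposition \ref{PropChangeConn copy(1)}.

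For the forward direction I would assume $P\simeq M\times U(1)$ and fix a $U(1)$-bundle isomorphism $\Phi\colon M\times U(1)\rightarrow P$ covering $\mathrm{id}_{M}$. Then $\Phi^{\ast}\Xi$ is a connection on the trivial bundle, and by construction $(P,\Xi)\simeq(M\times U(1),\Phi^{\ast}\Xi)$, so Theorem \ref{Clas} gives $\mathrm{Hol}^{\Xi}(\gamma)=\mathrm{Hol}^{\Phi^{\ast}\Xi}(\gamma)$ for all $\gamma$. Applying Proposition \ref{PropChangeConn copy(1)} to the two connections $\Phi^{\ast}\Xi$ and $\Xi_{0}$ on $M\times U(1)$, there is a $1$-form $\beta\in\Omega^{1}(M)$ with $\Phi^{\ast}\Xi=\Xi_{0}-2\pi i(p^{\ast}\beta)$, and hence $\mathrm{Hol}^{\Phi^{\ast}\Xi}(\gamma)=\mathrm{Hol}^{\Xi_{0}}(\gamma)\cdot\exp(2\pi i\int_{\gamma}\beta)=\exp(2\pi i\int_{\gamma}\beta)$, using $\mathrm{Hol}^{\Xi_{0}}(\gamma)=1$. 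Combining the two equalities yields the desired formula $\mathrm{Hol}^{\Xi}(\gamma)=\exp(2\pi i\int_{\gamma}\beta)$.

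For the converse, given a $1$-form $\beta$ with $\mathrm{Hol}^{\Xi}(\gamma)=\exp(2\pi i\int_{\gamma}\beta)$, I would build a connection on the trivial bundle realizing the same holonomy. Define $\Xi_{0}^{\prime}=\Xi_{0}-2\pi i(p^{\ast}\beta)$ on $M\times U(1)$; by Proposition \ref{PropChangeConn copy(1)} together with $\mathrm{Hol}^{\Xi_{0}}(\gamma)=1$ one gets $\mathrm{Hol}^{\Xi_{0}^{\prime}}(\gamma)=\exp(2\pi i\int_{\gamma}\beta)=\mathrm{Hol}^{\Xi}(\gamma)$ for every loop. Since $(P,\Xi)$ and $(M\times U(1),\Xi_{0}^{\prime})$ then have identical holonomy, Theorem \ref{Clas} forces $(P,\Xi)\simeq(M\times U(1),\Xi_{0}^{\prime})$; in particular $P\simeq M\times U(1)$, so $P\rightarrow M$ is a trivial $U(1)$-bundle.

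I expect no serious obstacle here, since the statement is explicitly flagged as a consequence of the two cited results. The only points requiring a little care are the bookkeeping of the isomorphism direction in Theorem \ref{Clas} (the isomorphism runs from the primed bundle to the unprimed one, with $\Phi^{\ast}\Xi=\Xi^{\prime}$) and the verification that the flat connection on the trivial bundle has holonomy $1$; both are routine.
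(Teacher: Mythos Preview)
Your argument is correct and follows exactly the route the paper indicates: the proposition is stated there as an immediate consequence of Theorem~\ref{Clas} and Proposition~\ref{PropChangeConn copy(1)}, with no further proof given, and your two implications unpack precisely that. The only cosmetic difference from the paper's explicit equivariant analogue (Theorem~\ref{anomaly}) is that in the converse you modify the connection on the trivial bundle rather than on $P$ itself, but this is the same idea viewed from the other side.
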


A connection $\Xi$ is flat if $\mathrm{curv}(\Xi)=0$. In this case, it
follows from Proposition \ref{HolonomyInt} that $\mathrm{Hol}^{\Xi}(\gamma)$
depends only on the homotopy class of $\gamma$. Hence the holonomy of a flat
connection defines a homomorphism $\mathrm{Hol}^{\Xi}\colon\pi_{1}(M)%
\rightarrow U(1)$.

If $\omega\in\Omega^{2}(M)$ is closed, then $\omega$ is prequantizable if
there exist a $U(1)$-bundle with connection $(P,\Xi)$ such that $\mathrm{curv%
}(\Xi)=\omega$. By a classical result of Weil and Kostant (e.g. see \cite[%
Proposition 2.1.1]{Kostant}), $\omega$ is prequantizable if and only if it
is integral, i.e., if its de Rham cohomology class comes from an integral
class under the natural map $H^{2}(M,\mathbb{Z})\rightarrow H^{2}(M,\mathbb{R%
})$.

\section{Equivariant holonomy}

In this section we define the equivariant holonomy and the notations that
are used in the rest of the paper. Let $G$ be a Lie group with Lie algebra $%
\mathfrak{g}$ and let $M$ be a connected manifold. A $G$-equivariant $U(1)$%
-bundle is a principal $U(1)$-bundle $p\colon P\rightarrow M$ in which $G$
acts (on the left) by principal bundle automorphisms. If $\phi \in G$ and $%
y\in P$, we denote by $\phi _{P}(y)$ the action of $\phi $ on $y$. In a
similar way, for $X\in \mathfrak{g}$ we denote by $X_{P}\in \mathfrak{X}(P)$
the corresponding vector field on $P$ defined by$\footnote{%
In the definition of the fundamental vector field $X_{P}$ we follow the sign
convention of \cite[page 10]{GS}}$ $X_{P}(x)=\left. \frac{d}{dt}\right\vert
_{t=0}\exp (-tX)_{P}(x)$. For any $\phi \in G$ we define%
\begin{equation*}
\mathcal{C}^{\phi }(M)=\{\gamma \colon I\rightarrow M\text{ }|\text{ }\gamma 
\text{ is piecewise }\mathcal{C}^{1}\text{ and }\gamma (1)=\phi _{M}(\gamma
(0))\},
\end{equation*}%
and $\mathcal{C}_{x}^{\phi }(M)=\{\gamma \in \mathcal{C}^{\phi }(M)$ $|$ $%
\gamma (0)=x\}$. Note that if $e\in G$ is the identity element, then $%
\mathcal{C}_{x}^{e}(M)=\mathcal{C}_{x}(M)$ is the space of loops based at $x$%
. If $\phi \in G$ and $\gamma \in \mathcal{C}_{x}^{\phi ^{\prime }}(M)$ then
we define $\phi \cdot \gamma \in \mathcal{C}_{\phi _{M}(x)}^{\phi \cdot \phi
^{\prime }\cdot \phi ^{-1}}(M)$ by $(\phi \cdot \gamma )(t)=\phi _{M}(\gamma
(t))$.

Let $\Xi $ be a $G$-invariant connection on a $G$-equivariant $U(1)$-bundle $%
P\rightarrow M$. If $\gamma \in \mathcal{C}_{x}^{\phi }(M)$ and $y\in
p^{-1}(x)$, we have a $\Xi $-horizontal lift $\overline{\gamma }^{y}\colon
I\rightarrow P$ with $\overline{\gamma }^{y}(0)=y$. We have $p(\overline{%
\gamma }^{y}(1))=p(\phi _{P}(y))=\phi _{M}(x)$, and hence there exists $u\in
U(1)$ such that $\overline{\gamma }^{y}(1)=(\phi _{P}(y))\cdot u$. As $%
\overline{\gamma }^{y\cdot z}=\overline{\gamma }^{y}\cdot z$ for $z\in U(1)$%
, it follows that $u$ does not depend on the $y\in p^{-1}(x)$ chosen and we
denote it by $\mathrm{Hol}_{\phi }^{\Xi }(\gamma )\in U(1)$. Hence the
equivariant holonomy is characterized by the property 
\begin{equation}
\overline{\gamma }^{y}(1)=\phi _{P}(y)\cdot \mathrm{Hol}_{\phi }^{\Xi
}(\gamma ).  \label{def}
\end{equation}%
Note that if $\gamma \in \mathcal{C}_{x}^{e}(M)$ is a loop on $M$, then $%
\mathrm{Hol}_{e}^{\Xi }(\gamma )=\mathrm{Hol}^{\Xi }(\gamma )$ is the
ordinary holonomy of $\gamma $.\ Furthermore, if $\gamma $,$\gamma ^{\prime
}\in \mathcal{C}^{\phi }(M)$ differ by a reparametrization then we have $%
\mathrm{Hol}_{\phi }^{\Xi }(\gamma ^{\prime })=\mathrm{Hol}_{\phi }^{\Xi
}(\gamma )$.

\begin{proposition}
\label{PropHol}If $P\rightarrow M$ is a $G$-equivariant principal $U(1)$%
-bundle, and $\Xi$ is a $G$-invariant connection on $P$, then for any $\phi$%
, $\phi^{\prime}\in G$\ and $x\in M$ we have

a) If $\gamma\in\mathcal{C}^{\phi^{\prime}}(M)$ then $\phi\cdot\gamma \in%
\mathcal{C}^{\phi\cdot\phi^{\prime}\cdot\phi^{-1}}(M)$\ and $\mathrm{Hol}%
_{\phi\cdot\phi^{\prime}\cdot\phi^{-1}}^{\Xi}(\phi\cdot\gamma)=\mathrm{Hol}%
_{\phi^{\prime}}^{\Xi}(\gamma)$.

b) If $\gamma\in\mathcal{C}^{\phi}(M)$ and $\gamma^{\prime}\in\mathcal{C}%
_{\gamma(1)}^{\phi^{\prime}}(M)$, then $\gamma\ast\gamma^{\prime}\in\mathcal{%
C}^{\phi^{\prime}\cdot\phi}(M)$ and we have $\mathrm{Hol}_{\phi^{\prime}%
\cdot\phi}^{\Xi}(\gamma\ast\gamma^{\prime})=\mathrm{Hol}_{\phi
}^{\Xi}(\gamma)\cdot\mathrm{Hol}_{\phi^{\prime}}^{\Xi}(\gamma^{\prime}).$

c) If $\gamma\in\mathcal{C}^{\phi}(M)$ then $\overleftarrow{\gamma}\in%
\mathcal{C}^{\phi^{-1}}(M)$ and $\mathrm{Hol}_{\phi^{-1}}^{\Xi }(%
\overleftarrow{\gamma})=\mathrm{Hol}_{\phi}^{\Xi}(\gamma)^{-1}$.

d) If $\gamma,\gamma^{\prime}\in\mathcal{C}_{x}^{\phi}(M)$ then $\gamma
^{\prime}\ast\overleftarrow{\gamma}\in\mathcal{C}(M)$ and $\mathrm{Hol}^{\Xi
}(\gamma^{\prime}\ast\overleftarrow{\gamma})=\mathrm{Hol}_{\phi}^{\Xi}(%
\gamma^{\prime})\cdot\mathrm{Hol}_{\phi}^{\Xi}(\gamma)^{-1}$.

e) If $\zeta\colon I\rightarrow M$ is a curve on $M$ such that $\zeta
(0)=\gamma(0)$ and $\gamma\in\mathcal{C}^{\phi}(M)$\ then $\overleftarrow {%
\zeta}\ast\gamma\ast(\phi\cdot\zeta)\in\mathcal{C}^{\phi}(M)$ and $\mathrm{%
Hol}_{\phi}^{\Xi}(\overleftarrow{\zeta}\ast\gamma\ast(\phi\cdot \zeta))=%
\mathrm{Hol}_{\phi}^{\Xi}(\gamma)$.

f) Let $P^{\prime}\rightarrow M^{\prime}$ be another $G$-equivariant $U(1)$%
-bundle with connection and $\Phi\colon P^{\prime}\rightarrow P$ be a $G$%
-equivariant $U(1)$-bundle morphism with covers $\underline{\Phi}\colon
M^{\prime}\rightarrow M$. The connection $\Xi^{\prime}=\Phi^{\ast}\Xi$ is $G$%
-invariant and we have $\mathrm{Hol}_{\phi}^{\Xi^{\prime}}(\gamma )=\mathrm{%
Hol}_{\phi}^{\Xi}(\underline{\Phi}\circ\gamma)$ for any $\phi\in G$ and $%
\gamma\in\mathcal{C}^{\phi}(M^{\prime})$.
\end{proposition}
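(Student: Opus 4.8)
The plan is to reduce every part to the defining relation (\ref{def}) together with a small number of structural facts about $\Xi$-horizontal lifts, which I would isolate first as preliminary observations. The facts I would use are: (i) $U(1)$-equivariance $\overline{\gamma}^{y\cdot z}=\overline{\gamma}^{y}\cdot z$ (already noted in the text); (ii) $G$-equivariance, namely that since $\Xi$ is $G$-invariant the automorphism $\phi_{P}$ preserves horizontal vectors, so $\phi_{P}\circ\overline{\gamma}^{y}=\overline{\phi\cdot\gamma}^{\,\phi_{P}(y)}$; (iii) the reversal rule $\overline{\overleftarrow{\gamma}}^{\,\overline{\gamma}^{y}(1)}=\overleftarrow{\overline{\gamma}^{y}}$; (iv) the concatenation rule that the horizontal lift of $\gamma\ast\gamma'$ from $y$ is $\overline{\gamma}^{y}$ followed by $\overline{\gamma'}^{\,\overline{\gamma}^{y}(1)}$; and (v) for a bundle morphism $\Phi$ with $\Xi'=\Phi^{\ast}\Xi$, that $\Phi$ carries $\Xi'$-horizontal lifts to $\Xi$-horizontal lifts, so $\Phi\circ\overline{\gamma}^{y'}=\overline{\underline{\Phi}\circ\gamma}^{\,\Phi(y')}$. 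Each of these is standard, and throughout I would freely use that $\phi_{P}$ and $\Phi$ commute with the $U(1)$-action.

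For (a) I would lift $\gamma$ from $y\in p^{-1}(x)$, apply $\phi_{P}$ to get by (ii) the lift of $\phi\cdot\gamma$ from $\phi_{P}(y)$, and evaluate at $t=1$: using (\ref{def}) for $\gamma$ and the commutation of $\phi_{P}$ with the $U(1)$-action gives $(\phi\phi')_{P}(y)\cdot\mathrm{Hol}_{\phi'}^{\Xi}(\gamma)$, while (\ref{def}) applied to $\phi\cdot\gamma$ gives $(\phi\phi')_{P}(y)\cdot\mathrm{Hol}_{\phi\phi'\phi^{-1}}^{\Xi}(\phi\cdot\gamma)$, and comparing yields the claim. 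For (b) I would lift $\gamma\ast\gamma'$ from $y$ using (iv): the first half ends at $\phi_{P}(y)\cdot\mathrm{Hol}_{\phi}^{\Xi}(\gamma)$, and lifting $\gamma'$ from there and using (\ref{def}) together with (i) produces the factor $\mathrm{Hol}_{\phi'}^{\Xi}(\gamma')$, giving the product formula. Part (c) is the one demanding a little care: I would reverse the lift $\overline{\gamma}^{y}$ to obtain by (iii) the horizontal lift of $\overleftarrow{\gamma}$ starting at $w=\phi_{P}(y)\cdot\mathrm{Hol}_{\phi}^{\Xi}(\gamma)$ and ending at $y$; then, because $\phi_{P}^{-1}(w)=y\cdot\mathrm{Hol}_{\phi}^{\Xi}(\gamma)$, the defining relation (\ref{def}) for $\overleftarrow{\gamma}\in\mathcal{C}^{\phi^{-1}}(M)$ forces $\mathrm{Hol}_{\phi^{-1}}^{\Xi}(\overleftarrow{\gamma})=\mathrm{Hol}_{\phi}^{\Xi}(\gamma)^{-1}$.

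Part (d) then follows formally by combining (b), applied to $\gamma'$ and $\overleftarrow{\gamma}$ whose composite lies in $\mathcal{C}^{\phi^{-1}\phi}(M)=\mathcal{C}(M)$, with (c), recalling $\mathrm{Hol}_{e}^{\Xi}=\mathrm{Hol}^{\Xi}$. For (f) I would first check $G$-invariance of $\Xi'=\Phi^{\ast}\Xi$ from the $G$-equivariance of $\Phi$ and the invariance of $\Xi$, then apply (v): lifting $\gamma$ in $P'$ from $y'$ and pushing forward by $\Phi$ gives the lift of $\underline{\Phi}\circ\gamma$ from $\Phi(y')$, and evaluating at $t=1$ with (\ref{def}) on both sides (using that $\Phi$ intertwines the $U(1)$- and $G$-actions) identifies the two holonomies. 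The only genuinely nontrivial item is (e), where I expect the main obstacle: the middle factor $\phi\cdot\zeta$ is a translate of $\zeta$ and the three pieces are not individually loops, so (b) does not apply directly. I would instead lift the whole curve $\overleftarrow{\zeta}\ast\gamma\ast(\phi\cdot\zeta)$ by hand from $w=\overline{\zeta}^{\,y}(1)$: by (iii) the $\overleftarrow{\zeta}$-part returns to $y$, by (\ref{def}) the $\gamma$-part reaches $\phi_{P}(y)\cdot\mathrm{Hol}_{\phi}^{\Xi}(\gamma)$, and by (ii) and (i) the $(\phi\cdot\zeta)$-part is exactly $\phi_{P}$ applied to the $\zeta$-lift (translated by the holonomy factor), so it terminates at $\phi_{P}(w)\cdot\mathrm{Hol}_{\phi}^{\Xi}(\gamma)$; comparing with (\ref{def}) for the whole curve shows the $\zeta$-contributions cancel and leaves $\mathrm{Hol}_{\phi}^{\Xi}(\gamma)$.
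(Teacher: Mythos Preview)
Your proposal is correct and follows essentially the same approach as the paper: both arguments isolate the same structural facts about horizontal lifts (your (i)--(v)) and then verify each item by evaluating the defining relation~(\ref{def}) at $t=1$, with (d) deduced from (b) and (c), and (e) handled by lifting the entire composite from $\overline{\zeta}^{\,y}(1)$. The paper's proof is terser but identical in substance; your enumeration of the preliminary facts makes explicit what the paper uses tacitly.
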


\begin{proof}
a) For any $\gamma\in\mathcal{C}^{\phi^{\prime}}(M)$ and $y\in
p^{-1}(\gamma(0))$ by equation (\ref{def}) we have $\overline{\gamma}%
^{y}(1)=\phi_{P}^{\prime}(y)\cdot\mathrm{Hol}_{\phi^{\prime}}^{\Xi}(\gamma)$
and $\overline{\phi\cdot\gamma}^{\phi_{P}(y)}(1)=(\phi\cdot\phi^{%
\prime})_{P}(y)\cdot\mathrm{Hol}_{\phi\cdot\phi^{\prime}\cdot\phi^{-1}}^{%
\Xi}(\phi\cdot\gamma)$.\ Using that $\Xi$ is $G$-invariant we obtain $\phi
\cdot\overline{\gamma}^{y}=\overline{\phi\cdot\gamma}^{\phi_{P}(y)}$, and
hence 
\begin{align*}
\overline{\phi\cdot\gamma}^{\phi_{P}(y)}(1) & =(\phi\cdot\overline{\gamma }%
^{y})(1)=\phi_{P}(\overline{\gamma}^{y}(1))=\phi_{P}(\phi_{P}^{\prime
}(y)\cdot\mathrm{Hol}_{\phi^{\prime}}^{\Xi}(\gamma)) \\
& =\phi_{P}(\phi_{P}^{\prime}(y))\cdot\mathrm{Hol}_{\phi^{\prime}}^{\Xi
}(\gamma)=(\phi\cdot\phi^{\prime})_{P}(y)\cdot\mathrm{Hol}%
_{\phi^{\prime}}^{\Xi}(\gamma).
\end{align*}
We conclude that we have $\mathrm{Hol}_{\phi\cdot\phi^{\prime}\cdot%
\phi^{-1}}^{\Xi}(\phi\cdot\gamma)=\mathrm{Hol}_{\phi^{\prime}}^{\Xi}(\gamma)$%
.

b) If $\gamma\in\mathcal{C}^{\phi}(M)$ and $y\in\pi^{-1}(\gamma(0))$ then we
have%
\begin{align*}
\overline{\gamma\ast\gamma^{\prime}}^{y}(1) & =\overline{\gamma^{\prime}}^{%
\overline{\gamma}^{y}(1)}(1)=\phi_{P}^{\prime}(\overline{\gamma}^{y}(1))\cdot%
\mathrm{Hol}_{\phi^{\prime}}^{\Xi}(\gamma^{\prime})=\phi
_{P}^{\prime}(\phi_{P}(y)\cdot\mathrm{Hol}_{\phi}^{\Xi}(\gamma))\cdot 
\mathrm{Hol}_{\phi^{\prime}}^{\Xi}(\gamma^{\prime}) \\
& =\phi_{P}^{\prime}(\phi_{P}(y))\cdot\mathrm{Hol}_{\phi}^{\Xi}(\gamma )\cdot%
\mathrm{Hol}_{\phi^{\prime}}^{\Xi}(\gamma^{\prime})=(\phi^{\prime}\cdot%
\phi)_{P}(y))\cdot\mathrm{Hol}_{\phi}^{\Xi}(\gamma)\cdot\mathrm{Hol}%
_{\phi^{\prime}}^{\Xi}(\gamma^{\prime})\text{.}
\end{align*}

c) follows from the fact that $\overleftarrow{\overline{\gamma}^{y}}$ is a
horizontal lift of $\overleftarrow{\gamma}$ and property d) follows from b)
and c).

e) If $y\in p^{-1}(\zeta(0))$ we define $y^{\prime}=\overline{\zeta}^{y}(1)$
and we have 
\begin{align*}
\overline{\overleftarrow{\zeta}\ast\gamma\ast(\phi\cdot\zeta)}%
^{y^{\prime}}(1) & =\overline{(\phi\cdot\zeta)}^{\overline{\gamma}%
^{y}(1)}(1)=\overline{(\phi\cdot\zeta)}^{\phi_{P}(y)}(1)\cdot\mathrm{Hol}%
_{\phi}^{\Xi }(\gamma) \\
& =(\phi\cdot\overline{\zeta}^{y})(1)\cdot\mathrm{Hol}_{\phi}^{\Xi}(\gamma)=%
\phi_{P}(y^{\prime})\cdot\mathrm{Hol}_{\phi}^{\Xi}(\gamma).
\end{align*}

f) follows easily from the properties of parallel transport.
\end{proof}

We recall that the equivariant holonomy $\mathrm{Hol}_{\phi}^{\Xi}(\gamma)$
depends on the curve $\gamma$, but it also depends on $\phi\in G$. For
example, if $x\in M$, $\gamma_{x}$ is the constant curve with value $x$ and $%
G_{x}$ is the isotropy group of $x$, then by Proposition \ref{PropHol} b) we
have a homomorphism $\chi_{x}^{\Xi}\colon G_{x}\rightarrow U(1)$ defined by $%
\chi_{x}^{\Xi}(\phi)=\mathrm{Hol}_{\phi}^{\Xi}(\gamma_{x})$.

The following example shows that the equivariant holonomy can be computed
geometrically. We will return to this example later.

\bigskip

\textbf{Example 1:} Let $S^{2}\subset\mathbb{R}^{3}$ be the sphere and $g$
the metric induced by the euclidean metric of $\mathbb{R}^{3}$. The
Levi-Civita connection $\Xi$ of $g$ is a connection on the orthonormal
oriented frame bundle $P\rightarrow S^{2}$, that has structure group $%
SO(2)\simeq U(1)$. The action of $G=SO(3)$ on $S^{2}$ lifts in a natural way
to an action on $P$. By Proposition \ref{PropHol} a) it is enough to study
the case of a rotation $\phi_{\alpha}$ of angle $\alpha$ around the $z$
axis. We can compute the $\phi_{\alpha}$-equivariant holonomy in geometrical
terms. The curvature of $\Xi$ coincides with the Euler form of $g$ and hence
we have $\mathrm{curv}(\Xi)=\frac{1}{2\pi}\mathrm{vol}_{g}$. If $\gamma$ is
a loop on $S^{2}$, by Proposition \ref{HolonomyInt} we have $\mathrm{Hol}%
^{\Xi}(\gamma)=\exp (i\cdot\mathrm{Area}(D))$, where $D\subset S^{2}$
satisfies $\gamma=\partial D$ (it exists because $S^{2}$ is simply
connected).

If $x$ is a point in $S^{2}$ and $\alpha\in\mathbb{R}/\mathbb{Z}$, we define 
$\sigma_{x,\alpha}\in\mathcal{C}_{x}^{\phi_{\alpha}}(M)$ by $%
\sigma_{x,\alpha }(t)=\phi_{t\alpha}(x)$. If $x$ is a point in the equator,
then $\sigma _{x,\alpha}$ is a geodesic. For any $y\in p^{-1}(x)$\ we have $%
\overline {\sigma_{x,\alpha}}^{y}(1)=(\phi_{\alpha})_{P}(y)$, and hence $%
\mathrm{Hol}_{\phi_{\alpha}}^{\Xi}(\sigma_{x,\alpha})=1$. If $\gamma\in%
\mathcal{C}_{x}^{\phi_{\alpha}}(M)$ then $\gamma\ast\overleftarrow{%
\sigma_{x,\alpha}}$ is a loop on $S^{2}$, and by Proposition \ref%
{HolonomyInt} d) we have%
\begin{equation*}
\mathrm{Hol}_{\phi_{\alpha}}^{\Xi}(\gamma)=\mathrm{Hol}^{\Xi}(\gamma \ast%
\overleftarrow{\sigma_{x,\alpha}})\cdot\mathrm{Hol}_{\phi_{\alpha}}^{\Xi
}(\sigma_{x,\alpha})=\exp(i\cdot\mathrm{Area}(D)),
\end{equation*}
where $\gamma\ast\overleftarrow{\sigma_{x,\alpha}}=\partial D$.

Finally, let $x$ be any point in $S^{2}$ and $\gamma\in\mathcal{C}%
_{x}^{\phi_{\alpha}}(M)$. We\ chose a curve $\zeta$ on $S^{2}$ joining $x$
and a point $x^{\prime}$ in the equator. Then by Proposition \ref%
{HolonomyInt} e)\ we have 
\begin{equation*}
\mathrm{Hol}_{\phi_{\alpha}}^{\Xi}(\gamma)=\mathrm{Hol}_{\phi_{\alpha}}^{\Xi
}(\overleftarrow{\zeta}\ast\gamma\ast(\phi_{\alpha}\cdot\zeta))=\exp (i\cdot%
\mathrm{Area}(D)),
\end{equation*}
where $\overleftarrow{\zeta}\ast\gamma\ast(\phi_{\alpha}\cdot\zeta )\ast%
\overleftarrow{\sigma_{x^{\prime},\alpha}}=\partial D$.

In particular, if $x=(0,0,1)$ then for any $\alpha\in\mathbb{R}/\mathbb{Z}$
we have $\phi_{\alpha}\in G_{x}$ and $\chi_{x}^{\Xi}\colon G_{x}\rightarrow
U(1)$ is given by $\chi_{x}^{\Xi}(\phi_{\alpha})=\exp(-i\alpha)$.$%
\blacksquare$

\section{Equivariant Curvature}

First, we recall the definition of equivariant cohomology in the Cartan
model (\emph{e.g. }see \cite{GS}). Suppose that we have a left action of a
connected Lie group $G$ on a manifold $M$. We denote by $\Omega^{k}(M)^{G}$
the space of $G$-invariant $k$-forms on $M$. Let $\Omega_{G}^{\bullet}(M)=%
\left( \mathbf{S}^{\bullet}(\mathfrak{g}^{\ast})\otimes\Omega^{\bullet}(M)%
\right) ^{G}$ be the space of $G$-invariant polynomials on $\mathfrak{g}$
with values in $\Omega^{\bullet}(M)$, with the graduation $\deg(\alpha)=2k+r$
if $\alpha\in\mathbf{S}^{k}(\mathfrak{g}^{\ast})\otimes\Omega^{r}(M)$. Let $%
D\colon\Omega_{G}^{q}(M)\rightarrow\Omega_{G}^{q+1}(M)$ be the Cartan
differential, $(D\alpha)(X)=d(\alpha(X))-\iota_{X_{M}}\alpha(X)$ for $X\in%
\mathfrak{g}$. On $\Omega_{G}^{\bullet}(M)$ we have $D^{2}=0$, and the
equivariant cohomology (in the Cartan model) of $M$ with respect of the
action of $G$ is defined as the cohomology of this complex.

Let $\varpi\in\Omega_{G}^{2}(M)$ be a $G$-equivariant $2$-form. Then we have 
$\varpi=\omega+\mu$ where $\omega\in\Omega^{2}(M)$ is $G$-invariant and $%
\mu\in\mathrm{Hom}\left( \mathfrak{g},\Omega^{0}(M)\right) ^{G}$. We have $%
D\omega=0\;$if\ and only if $d\omega=0$, and $\iota_{X_{M}}\omega=d(\mu
_{X})\;$for every $X\in\mathfrak{g}$. Hence $\mu$ is a comoment map for $%
\omega.$

If $\Xi$ is a $G$-invariant connection on a principal $U(1)$ bundle $%
P\rightarrow M$ then $\frac{i}{2\pi}D(\Xi)$ projects onto a closed $G$%
-equivariant $2$-form $\mathrm{curv}_{G}(\Xi)\in\Omega_{G}^{2}(M))$ called
the $G$-equivariant curvature of $\Xi$. If $X\in\mathfrak{g}$ then we have $%
\mathrm{curv}_{G}(\Xi)(X)=\mathrm{curv}(\Xi)+\mu_{X}^{\Xi}$, where $\mu
_{X}^{\Xi}=-\frac{i}{2\pi}\Xi(X_{P})$ is called the momentum of $\Xi$. If $%
\Xi^{\prime}$ is another $G$-invariant connection we have $%
\Xi^{\prime}=\Xi-2\pi i(p^{\ast}\lambda)$ for a $G$-invariant $%
\lambda\in\Omega^{1}(M)^{G}$. Then $\mathrm{curv}_{G}(\Xi^{\prime})=\mathrm{%
curv}_{G}(\Xi)+D\lambda$ and hence the equivariant cohomology class $[%
\mathrm{curv}_{G}(\Xi)]\in H_{G}^{2}(M)$ does not depend on the $G$%
-invariant connection chosen.

The Maurer-Cartan form on $U(1)$ is denoted by $\vartheta=z^{-1}dz$, and $%
\xi\in\mathfrak{X}(U(1))$ is the $U(1)$-invariant vector field $\xi (z)=iz$\
such that $\vartheta(\xi)=i$. We denote by $\xi_{P}\in\mathfrak{X}(P)$ the
vector field on $P$ corresponding to $\xi$.

For a topologically trivial bundle $M\times U(1)\rightarrow M$ the action of 
$G$ on $M\times U(1)$ is determined by a map $\theta\colon G\times
M\rightarrow U(1)$ characterized by the property $\phi_{P}(x,u)=(\phi
(x),u\cdot\theta_{\phi}(x))$. It satisfies the cocycle condition $\theta
_{\phi^{\prime}\cdot\phi}(x)=\theta_{\phi}(x)\cdot\theta_{\phi^{\prime}}(%
\phi(x))$. Conversely, any cocycle determines an action of $G$ on $M\times
U(1)$ such that it is\ a $G$-equivariant $U(1)$-bundle. In this case the
equivariant holonomy can be studied in terms of the cocycle $%
\theta_{\phi}(x) $ (e.g. see \cite{AnomaliesG}). For an arbitrary bundle, $%
\theta_{\phi}$ is defined only in a local trivialization. If $\Psi\colon%
\mathbb{R}^{n}\times U(1)\rightarrow P$ is a local trivialization covering a
map $\underline{\Psi }\colon\mathbb{R}^{n}\rightarrow M$ we have 
\begin{equation}
\Psi^{\ast}\Xi=\vartheta-2\pi i\underline{\Psi}^{\ast}\rho^{\Psi}
\label{trivialization}
\end{equation}
for a form $\rho^{\Psi}\in\Omega^{1}(\func{Im}\underline{\Psi})$. Note that
we have $\mathrm{curv}(\Xi)=d\rho^{\Psi}$ on $\func{Im}\underline{\Psi}$.

We denote by $\alpha\colon G\times M\rightarrow M$ the map defining the
action of $G$ on $M$, i.e., $\alpha(\phi,x)=\phi_{M}(x)$. Given $x\in M,$ we
can find neighborhoods $\mathcal{U}$ and $\mathcal{V}$ of $x$ and $e$ such
that $\mathcal{V}\times\mathcal{U}\subset\alpha^{-1}(\func{Im}\underline{\Psi%
})$. We define $\theta^{\Psi}\colon\mathcal{V}\times \mathcal{U}\rightarrow
U(1)$ by the property $\mathrm{pr}_{2}(\Psi^{-1}(\phi_{P}(y)))=\mathrm{pr}%
_{2}(\Psi^{-1}(y))\cdot\theta_{\phi}^{\Psi}(x)$ for any $y\in p^{-1}(x)$
with $x\in\mathcal{U}$ and $\phi\in\mathcal{V}$, and where $\mathrm{pr}%
_{2}\colon\mathbb{R}^{n}\times U(1)\rightarrow U(1)$ denotes the projection.
At the infinitesimal level, for any $X\in\mathfrak{g}$ we have 
\begin{equation}
\Psi_{\ast}^{-1}(X_{P})=\underline{\Psi}_{\ast}^{-1}(X_{M})+2\pi \mathfrak{a}%
_{X}^{\Psi}\cdot\xi,  \label{A(X)}
\end{equation}
where $\mathfrak{a}_{X}^{\Psi}(x)=\tfrac{i}{2\pi}\left. \tfrac{d}{dt}%
\right\vert _{t=0}\theta_{\exp(tX)}^{\Psi}(x)$. Using equations (\ref%
{trivialization}) and (\ref{A(X)}) we obtain the following

\begin{lemma}
\label{horizontalLift}a) The horizontal lift of $\gamma\colon I\rightarrow 
\func{Im}\Psi\subset M$ is given by $\overline{\gamma}^{\Psi
(z(0),u)}(s)=\Psi(z(s),u\cdot\exp(2\pi i{\textstyle\int\nolimits_{0}^{s}}%
\rho_{\gamma(t)}^{\Psi}(\dot{\gamma}(t))dt))$, where $z(s)=\underline{\Psi }%
^{-1}(\gamma(s))$.

b) $\mu_{X}^{\Xi}=\mathfrak{a}_{X}^{\Psi}-\rho^{\Psi}(X_{M})$.
\end{lemma}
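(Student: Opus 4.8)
The plan is to prove both statements by computing in the local trivialization $\Psi$, where the connection is given explicitly by (\ref{trivialization}), namely $\Psi^{\ast}\Xi=\vartheta-2\pi i\,\underline{\Psi}^{\ast}\rho^{\Psi}$. Throughout I will use the two structural facts about this expression: the Maurer--Cartan form $\vartheta$ annihilates vectors tangent to the $\mathbb{R}^{n}$-factor and satisfies $\vartheta(\xi)=i$, while $\underline{\Psi}^{\ast}\rho^{\Psi}$ (pulled back along the projection to $\mathbb{R}^{n}$) annihilates vertical vectors. These two observations carry the whole argument.

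For part a), I would first note that a horizontal lift is characterized by covering $\gamma$ and having $\Xi$-horizontal velocity. The covering property is immediate, since $p\circ\Psi(z(s),w)=\underline{\Psi}(z(s))=\gamma(s)$, and the initial condition holds because the exponential factor equals $1$ at $s=0$. For horizontality I would write the candidate lift in the trivialization as $\tilde{\gamma}(s)=(z(s),w(s))$ with $w(s)=u\cdot\exp(2\pi i\int_{0}^{s}\rho^{\Psi}_{\gamma(t)}(\dot{\gamma}(t))\,dt)$ and evaluate $(\Psi^{\ast}\Xi)(\dot{\tilde{\gamma}}(s))$. Differentiating $w$ gives $\vartheta(\dot{\tilde{\gamma}}(s))=w(s)^{-1}\dot{w}(s)=2\pi i\,\rho^{\Psi}_{\gamma(s)}(\dot{\gamma}(s))$, while the second term contributes $-2\pi i\,\rho^{\Psi}_{\gamma(s)}(\dot{\gamma}(s))$ after using $\underline{\Psi}_{\ast}\dot{z}(s)=\dot{\gamma}(s)$. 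The two terms cancel, so the velocity is horizontal and the curve is the desired horizontal lift.

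For part b), I would start from the definition $\mu_{X}^{\Xi}=-\frac{i}{2\pi}\Xi(X_{P})$ and compute $\Xi(X_{P})=(\Psi^{\ast}\Xi)(\Psi_{\ast}^{-1}X_{P})$ by inserting the infinitesimal formula (\ref{A(X)}), $\Psi_{\ast}^{-1}(X_{P})=\underline{\Psi}_{\ast}^{-1}(X_{M})+2\pi\mathfrak{a}_{X}^{\Psi}\cdot\xi$. Pairing $\vartheta-2\pi i\,\underline{\Psi}^{\ast}\rho^{\Psi}$ with this vector, the base part of $\vartheta$ vanishes, its $\xi$-part gives $2\pi i\,\mathfrak{a}_{X}^{\Psi}$, the base part of $-2\pi i\,\underline{\Psi}^{\ast}\rho^{\Psi}$ gives $-2\pi i\,\rho^{\Psi}(X_{M})$, and its vertical part vanishes. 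Hence $\Xi(X_{P})=2\pi i(\mathfrak{a}_{X}^{\Psi}-\rho^{\Psi}(X_{M}))$, and multiplying by $-\frac{i}{2\pi}$ (with $-i^{2}=1$) yields $\mu_{X}^{\Xi}=\mathfrak{a}_{X}^{\Psi}-\rho^{\Psi}(X_{M})$.

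Neither computation is deep; the only real hazard --- and the step I would watch most carefully --- is the bookkeeping that decides which summand of each tangent vector is detected by $\vartheta$ and which by $\underline{\Psi}^{\ast}\rho^{\Psi}$, together with keeping the factors of $2\pi$ and the signs coming from $i^{2}=-1$ correct. In particular I would make sure the sign conventions fixed earlier for the fundamental vector field $X_{P}$, for $\mathfrak{a}_{X}^{\Psi}$, and for $\mu_{X}^{\Xi}$ are used consistently, since (\ref{A(X)}) already encodes the interplay between the $\exp(-tX)$ in the definition of $X_{P}$ and the $\exp(tX)$ in the definition of $\mathfrak{a}_{X}^{\Psi}$.
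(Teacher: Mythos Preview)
Your proof is correct and follows exactly the approach indicated in the paper, which simply states that the lemma is obtained ``using equations (\ref{trivialization}) and (\ref{A(X)})'' without writing out the details. You have filled in precisely those details: for a) verifying horizontality by pairing $\Psi^{\ast}\Xi$ with the velocity of the candidate lift, and for b) inserting (\ref{A(X)}) into the local expression for $\Xi$ and reading off $\mu_{X}^{\Xi}$.
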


If $\gamma\in\mathcal{C}_{x}^{\phi}(\func{Im}\underline{\Psi})$ with $x\in%
\mathcal{U}$ and $\phi\in\mathcal{V}$, then using Lemma \ref{horizontalLift}
a) we obtain

\begin{equation}
\mathrm{Hol}_{\phi}^{\Xi}(\gamma)=\theta_{\phi}^{\Psi}(x)^{-1}\cdot\exp%
\left( 2\pi i\tint _{\gamma}\rho^{\Psi}\right) .  \label{HolLocal}
\end{equation}

By Proposition \ref{HolonomyInt} the curvature of the connection\ measures
the infinitesimal holonomy. In a similar way the second term of the
equivariant curvature, the momentum $\mu^{\Xi}$, measures the infinitesimal
variation of the equivariant holonomy with respect to $\phi\in G$.

\begin{proposition}
\label{derivadaHol}Let $\varphi\colon(t_{0}-\varepsilon,t_{0}+\varepsilon
)\rightarrow G$ be a curve on $G$ with $\varphi_{t_{0}}=e$ and $x\in M$. If $%
X=\dot{\varphi}_{t_{0}}\in\mathfrak{g},$ and $\sigma_{x,t}(s)=(\varphi
_{t_{0}+s(t-t_{0})})_{M}(x)$, then $\sigma_{x,t}\in\mathcal{C}%
_{x}^{\varphi_{t}}(M)$\ and we have $\left. \tfrac{d}{dt}\right\vert
_{t=t_{0}}\mathrm{Hol}_{\varphi_{t}}^{\Xi}(\sigma_{x,t})=2\pi
i\mu_{X}^{\Xi}(x)$.
\end{proposition}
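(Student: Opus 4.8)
The plan is to reduce everything to the local holonomy formula (\ref{HolLocal}) in a trivialization $\Psi$ around $x$. Since only the derivative at $t=t_0$ is needed and $\varphi_{t_0}=e$, for $t$ close to $t_0$ the element $\varphi_t$ lies in the neighborhood $\mathcal{V}$ of $e$, and by continuity together with compactness of $I$ the entire curve $\sigma_{x,t}$ stays inside $\func{Im}\underline{\Psi}$ with $x\in\mathcal{U}$. Hence (\ref{HolLocal}) applies for all such $t$ and gives
\[
\mathrm{Hol}_{\varphi_t}^{\Xi}(\sigma_{x,t})=\theta_{\varphi_t}^{\Psi}(x)^{-1}\cdot\exp\left(2\pi i\int_{\sigma_{x,t}}\rho^{\Psi}\right).
\]
Both factors equal $1$ at $t=t_0$: the first because $\theta_e^{\Psi}(x)=1$ (immediate from the defining property of $\theta^\Psi$ with $\phi=e$), the second because $\sigma_{x,t_0}$ is the constant curve at $x$. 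Since the value is $1$ at $t_0$, the product rule makes the derivative at $t_0$ the sum of the derivatives of the two factors, and I would compute each one separately.

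For the first factor I would use that $\phi\mapsto\theta_\phi^{\Psi}(x)$ is smooth together with $\varphi_{t_0}=e$ and $\dot\varphi_{t_0}=X$; the chain rule then identifies $\left.\tfrac{d}{dt}\right\vert_{t_0}\theta_{\varphi_t}^{\Psi}(x)$ with the directional derivative $\left.\tfrac{d}{ds}\right\vert_{s=0}\theta_{\exp(sX)}^{\Psi}(x)$, which by the definition $\mathfrak{a}_X^{\Psi}(x)=\tfrac{i}{2\pi}\left.\tfrac{d}{ds}\right\vert_{s=0}\theta_{\exp(sX)}^{\Psi}(x)$ equals $-2\pi i\,\mathfrak{a}_X^{\Psi}(x)$. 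Differentiating the inverse and using $\theta_e^{\Psi}(x)=1$ yields $\left.\tfrac{d}{dt}\right\vert_{t_0}\theta_{\varphi_t}^{\Psi}(x)^{-1}=2\pi i\,\mathfrak{a}_X^{\Psi}(x)$.

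For the second factor I would write $\int_{\sigma_{x,t}}\rho^{\Psi}=\int_0^1\rho^{\Psi}_{\sigma_{x,t}(s)}(\partial_s\sigma_{x,t}(s))\,ds$. Because $\sigma_{x,t}(s)=(\varphi_{t_0+s(t-t_0)})_M(x)$, the $s$-velocity carries a factor $(t-t_0)$, so the integral has the form $(t-t_0)\,h(t)$; differentiating this product at $t_0$ leaves only $h(t_0)$, and since $\sigma_{x,t_0}(s)\equiv x$ this reduces to $\rho^{\Psi}_x$ evaluated on $\left.\tfrac{d}{du}\right\vert_{u=t_0}(\varphi_u)_M(x)$. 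The one delicate point—and the main thing to get right—is the sign from the fundamental vector field convention $X_M(x)=\left.\tfrac{d}{dt}\right\vert_{t=0}\exp(-tX)_M(x)$, which forces $\left.\tfrac{d}{du}\right\vert_{u=t_0}(\varphi_u)_M(x)=-X_M(x)$; this makes the contribution of the second factor equal to $-2\pi i\,\rho^{\Psi}(X_M)(x)$. Adding the two contributions gives $2\pi i\big(\mathfrak{a}_X^{\Psi}(x)-\rho^{\Psi}(X_M)(x)\big)$, and Lemma \ref{horizontalLift} b), $\mu_X^{\Xi}=\mathfrak{a}_X^{\Psi}-\rho^{\Psi}(X_M)$, then identifies this with $2\pi i\,\mu_X^{\Xi}(x)$, as claimed. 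The main obstacle is purely bookkeeping: tracking the sign convention for $X_M$ and the factors of $2\pi i$ so that the two local terms assemble correctly into $2\pi i\,\mu_X^{\Xi}$.
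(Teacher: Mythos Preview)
Your proposal is correct and follows essentially the same approach as the paper: apply the local formula (\ref{HolLocal}) in a trivialization, split the product into the $\theta^{\Psi}$ factor and the $\exp(2\pi i\int_{\sigma_{x,t}}\rho^{\Psi})$ factor, differentiate each at $t_0$, and assemble the two contributions via Lemma \ref{horizontalLift} b). The only difference is cosmetic: the paper rewrites $\int_{\sigma_{x,t}}\rho^{\Psi}$ via the substitution $r=t_0+s(t-t_0)$ as $\int_{t_0}^{t}\rho^{\Psi}((\dot\varphi_r)_M(x))\,dr$ and differentiates with the fundamental theorem of calculus, whereas you factor out $(t-t_0)$ explicitly; your version is slightly more careful in tracking the sign coming from the convention $X_M(x)=\left.\tfrac{d}{dt}\right\vert_{t=0}\exp(-tX)_M(x)$.
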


\begin{proof}
We chose a local trivialization $\Psi\colon\mathbb{R}^{n}\times
U(1)\rightarrow P$ and neighborhoods $\mathcal{U}$ and $\mathcal{V}$ of $x$
and $e$ such that $\mathcal{V}\times\mathcal{U}\subset\alpha^{-1}(\func{Im}%
\underline{\Psi})$. For $t$ close to $t_{0}$ we can use equation (\ref%
{HolLocal}) and we have 
\begin{equation*}
\mathrm{Hol}_{\varphi_{t}}^{\Xi}(\sigma_{x,t})=\theta_{\varphi_{t}}^{\Psi
}(x)^{-1}\cdot\exp(2\pi i\tint _{\sigma_{x,t}}\rho^{\Psi}).
\end{equation*}
Furthermore, we have 
\begin{align*}
\int_{\sigma_{x,t}}\rho^{\Psi} & =\int_{0}^{1}\rho^{\Psi}(\dot{\sigma}%
_{x,t}(s))ds=\int_{0}^{1}\rho^{\Psi}((\dot{\varphi}%
_{t_{0}+s(t-t_{0})})_{M}(x))(t-t_{0})ds \\
& \underset{r=t_{0}+s(t-t_{0})}{=}\int_{t_{0}}^{t}\rho^{\Psi}((\dot{\varphi }%
_{r}(x))_{M})dr.
\end{align*}
By taking the derivative and using equation (\ref{HolLocal}) we obtain 
\begin{align*}
\left. \tfrac{d}{dt}\right\vert _{t=t_{0}}\mathrm{Hol}_{\varphi_{t}}^{\Xi
}(\sigma_{x,t}) & =\left. \tfrac{d}{dt}\right\vert _{t=t_{0}}\theta
_{\varphi_{t}}^{\Psi}(x)^{-1}+\left. \tfrac{d}{dt}\right\vert
_{t=t_{0}}\exp(2\pi i\tint _{\sigma_{x,t}}\rho^{\Psi}) \\
& =2\pi i\mathfrak{a}_{X}^{\Psi}(x)-2\pi i\rho^{\Psi}(X_{M}(x))=2\pi i\mu
_{X}^{\Xi}(x).
\end{align*}
\end{proof}

\begin{proposition}
\label{InfHolonomy}For any $X\in\mathfrak{g}$ and $x\in M$ we define $%
\tau_{x,X}(s)=\exp(sX)_{M}(x)$. Then $\tau_{x,X}\in\mathcal{C}_{x}^{\exp
(X)}(M)$\ and we have $\mathrm{Hol}_{\exp(X)}^{\Xi}(\tau_{x,X})=\exp(2\pi
i\mu_{X}^{\Xi}(x))$.
\end{proposition}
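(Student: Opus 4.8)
The plan is to reduce this finite statement to the infinitesimal computation of Proposition~\ref{derivadaHol} by exploiting the one-parameter group structure of $\{\exp(tX)\}_{t\in\mathbb{R}}$. First observe that $\tau_{x,X}(0)=\exp(0)_M(x)=x$ and $\tau_{x,X}(1)=\exp(X)_M(x)=(\exp X)_M(x)$, so indeed $\tau_{x,X}\in\mathcal{C}_x^{\exp(X)}(M)$. For $t\in\mathbb{R}$ set $\sigma_{x,t}(s)=\exp(stX)_M(x)$, so that $\sigma_{x,t}\in\mathcal{C}_x^{\exp(tX)}(M)$; this is exactly the curve appearing in Proposition~\ref{derivadaHol} for $\varphi_t=\exp(tX)$ and $t_0=0$. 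Define $h(t)=\mathrm{Hol}_{\exp(tX)}^{\Xi}(\sigma_{x,t})\in U(1)$. Since $\sigma_{x,1}=\tau_{x,X}$, the quantity to compute is $h(1)$, and the whole argument reduces to showing that $t\mapsto h(t)$ is a one-parameter subgroup of $U(1)$ and identifying its generator.

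The key step is the homomorphism property $h(t+t')=h(t)\cdot h(t')$. Writing $x'=\exp(tX)_M(x)$, the orbit curve $\sigma_{x,t+t'}$ is, up to reparametrization, the concatenation $\sigma_{x,t}\ast\sigma_{x',t'}$, with $\sigma_{x,t}\in\mathcal{C}_x^{\exp(tX)}(M)$ and $\sigma_{x',t'}\in\mathcal{C}_{x'}^{\exp(t'X)}(M)$. By Proposition~\ref{PropHol}~b), reparametrization invariance of the equivariant holonomy, and $\exp(t'X)\cdot\exp(tX)=\exp((t+t')X)$, I obtain $h(t+t')=\mathrm{Hol}_{\exp(tX)}^{\Xi}(\sigma_{x,t})\cdot\mathrm{Hol}_{\exp(t'X)}^{\Xi}(\sigma_{x',t'})$. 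It remains to move the base point of the second factor from $x'$ back to $x$. This is precisely Proposition~\ref{PropHol}~a) with $\phi=\exp(tX)$: since $\{\exp(tX)\}$ is abelian the conjugation $\phi\cdot\exp(t'X)\cdot\phi^{-1}$ is trivial, and a direct check gives $\phi\cdot\sigma_{x,t'}=\sigma_{x',t'}$, whence $\mathrm{Hol}_{\exp(t'X)}^{\Xi}(\sigma_{x',t'})=\mathrm{Hol}_{\exp(t'X)}^{\Xi}(\sigma_{x,t'})=h(t')$. This gives $h(t+t')=h(t)h(t')$.

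To finish, note that $h\colon\mathbb{R}\to U(1)$ is a homomorphism which, by Proposition~\ref{derivadaHol}, is differentiable at the origin with $h'(0)=2\pi i\,\mu_X^{\Xi}(x)$. Differentiating $h(t+s)=h(t)h(s)$ in $s$ at $s=0$ yields $h'(t)=h(t)\,h'(0)$, and together with $h(0)=1$ this gives $h(t)=\exp\!\left(2\pi i\,t\,\mu_X^{\Xi}(x)\right)$; evaluating at $t=1$ proves the claim.

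I expect the main obstacle to lie in the bookkeeping of the homomorphism step, specifically in verifying $\phi\cdot\sigma_{x,t'}=\sigma_{x',t'}$ and that the second orbit segment carries the same holonomy as the segment based at $x$; commutativity of the one-parameter subgroup $\{\exp(tX)\}$ is what collapses the conjugation in Proposition~\ref{PropHol}~a) and makes this clean. As an alternative I could argue directly from the definition: the curve $c(t)=\exp(tX)_P(y)$ covers $\tau_{x,X}$ and, by the sign convention for $X_P$, satisfies $c'(t)=-X_P(c(t))$; writing the horizontal lift as $c(t)\cdot g(t)$ with $g(0)=1$ reduces horizontality to the scalar equation $g^{-1}g'=\Xi(X_P)=2\pi i\,\mu_X^{\Xi}(\tau_{x,X}(t))$, and comparison with (\ref{def}) gives $\mathrm{Hol}_{\exp(X)}^{\Xi}(\tau_{x,X})=g(1)$. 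In this route the obstacle shifts to proving that $\mu_X^{\Xi}$ is constant along the orbit of $X$, which follows from the $G$-invariance of $\Xi$ (equivalently from $\iota_{X_M}\iota_{X_M}\mathrm{curv}(\Xi)=0$ together with the closedness of $\mathrm{curv}_G(\Xi)$).
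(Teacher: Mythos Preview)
Your main argument is correct and is essentially identical to the paper's proof: both define $h(t)=\mathrm{Hol}_{\exp(tX)}^{\Xi}(\sigma_{x,t})$, establish the homomorphism property $h(t+t')=h(t)h(t')$ via the reparametrization $\sigma_{t+s}\sim\sigma_t\ast(\exp(tX)\cdot\sigma_s)$ together with Proposition~\ref{PropHol}~a) and~b), invoke Proposition~\ref{derivadaHol} for $h'(0)$, and solve the resulting ODE. Your alternative direct route via the horizontal lift of the orbit curve is a genuinely different (and slightly more hands-on) argument not taken in the paper.
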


\begin{proof}
We define $\sigma_{t}(s)=\exp(stX)_{M}\cdot x$ and we have $\sigma_{t}\in%
\mathcal{C}_{x}^{\exp(tX)}(M)$.\ By Proposition \ref{derivadaHol} we have $%
\left. \frac{d}{dt}\right\vert _{t=0}\mathrm{Hol}_{\exp(tX)}^{\Xi}(%
\sigma_{t})=2\pi i\mu_{X}^{\Xi}(x)$.

The curves $\sigma _{t+s}$ and $\sigma _{t}\ast (\exp (tX)\cdot \sigma _{s})$
differ by a reparametrization, and by Proposition \ref{PropHol} a) and b) we
conclude that $\mathrm{Hol}_{\exp ((t+s)X)}^{\Xi }(\sigma _{t+s})=\mathrm{Hol%
}_{\exp (tX)}^{\Xi }(\sigma _{t})\cdot \mathrm{Hol}_{\exp (sX)}^{\Xi
}(\sigma _{s})$. By taking the derivative we obtain%
\begin{align*}
\tfrac{d}{dt}\mathrm{Hol}_{\exp (tX)}^{\Xi }(\sigma _{t})& =\left. \tfrac{d}{%
ds}\right\vert _{s=0}\mathrm{Hol}_{\exp ((t+s)X)}^{\Xi }(\sigma _{t+s}) \\
& =\mathrm{Hol}_{\exp (tX)}^{\Xi }(\sigma _{t})\cdot \left. \tfrac{d}{ds}%
\right\vert _{s=0}\mathrm{Hol}_{\exp (sX)}^{\Xi }(\sigma _{s}) \\
& =2\pi i\mu _{X}^{\Xi }(x)\cdot \mathrm{Hol}_{\exp (tX)}^{\Xi }(\sigma _{t})%
\text{.}
\end{align*}%
The result follows by solving the differential equation and using that $\tau
_{x,X}=\sigma _{1}$ and that $\mathrm{Hol}_{\exp (0\cdot X)}^{\Xi }(\sigma
_{0})=1$ .
\end{proof}

\bigskip

\textbf{Example 1 (continuation):} We consider again example 1. We recall
that the symplectic form $\mathrm{vol}_{g}\in\Omega^{2}(S^{2})^{SO(3)}$
admits a canonical comoment map. For example it can be obtained by
considering $S^{2}$ as a coadjoint orbit of $SO(3)$ (e.g. see \cite[\S 4.6.1]%
{Abraham}). The map $\vec{v}\colon\mathfrak{so}(3)\longrightarrow\mathbb{R}%
^{3}$ that assigns to $X=\left( 
\begin{array}{rrr}
0 & a & b \\ 
-a & 0 & c \\ 
-b & -c & 0%
\end{array}
\right) \in\mathfrak{so}(3)$ the vector $\vec{v}_{X}=(c,-b,a)$ determines a
Lie algebra isomorphism between $(\mathfrak{so}(3),[,])$ and $(\mathbb{R}%
^{3},\times)$. Note that $X\in\mathfrak{so}(3)$ is an infinitesimal
generator of rotations around the axis determined by $\vec{v}_{X}$. We
define $h\colon\mathfrak{so}(3)\rightarrow\Omega^{0}(S^{2})$ by $%
h_{X}(x)=\langle \vec{v}_{X},x\rangle$ for $x\in S^{2}$ and $h$ is a
comoment map for $\mathrm{vol}_{g}$. Furthermore, the difference of two
comoment maps for $\mathrm{vol}_{g}$ determines an element of $H^{1}(%
\mathfrak{so}(3))=0$, and hence the comoment map is unique. We conclude from
this result that we have $\mu^{\Xi}=\frac{1}{2\pi}h$ and $\mathrm{curv}%
_{SO(3)}^{\Xi}=\frac{1}{2\pi }\mathrm{vol}_{g}+\frac{1}{2\pi}h$.

Let $X\in\mathfrak{so}(3)$ be the infinitesimal generator of rotations $%
\phi_{\alpha}$ around the z-axis, i.e., 
\begin{equation}
X=\left( 
\begin{array}{rrr}
0 & -1 & 0 \\ 
1 & 0 & 0 \\ 
0 & 0 & 0%
\end{array}
\right) .  \label{X generator}
\end{equation}
By Proposition \ref{InfHolonomy} we have $\mathrm{Hol}_{\phi_{\alpha}}^{\Xi
}(\tau_{x,\alpha X})=\exp(i\alpha h_{X}(x))$, a result that can be easily
seen to coincide with the previous result using the formula for the area of
a sector of the sphere.$\blacksquare$

\section{Equivariant holonomy and quotient}

In this section we assume that the actions of $G$ on $P$ and $M$ are free,
and that $P\overset{\bar{q}_{G}}{\rightarrow }P/G$ and $M\overset{q_{G}}{%
\rightarrow }M/G$\ are (left) principal $G$-bundles. This happens for
example if the action is free and proper (e.g. see \cite[page 264]{Abraham}%
). Then we have a well defined quotient bundle $P/G\rightarrow M/G$, and the
diagram%
\begin{equation*}
\begin{array}{ccc}
P & \overset{\bar{q}_{G}}{\rightarrow } & P/G \\ 
\downarrow &  & \downarrow \\ 
M & \overset{q_{G}}{\rightarrow } & M/G%
\end{array}%
\end{equation*}

Let $\Xi $ be a $G$-invariant connection on $P\rightarrow M$. Then $\Xi $ is 
$\bar{q}_{G}$-projectable onto a connection $\underline{\Xi }$ on $%
P/G\rightarrow M/G$ if and only if it is $G$-basic, and this is equivalent
to $\mu ^{\Xi }=0$. The next proposition shows that in this case the
holonomy of the connection $\underline{\Xi }$ can be computed in terms of
the $G$-equivariant holonomy of $\Xi $.

\begin{proposition}
If $\mu^{\Xi}=0$ then $\Xi$ projects onto a connection $\underline{\Xi}%
\in\Omega^{1}(P/G,i\mathbb{R})$. If $\gamma\in\mathcal{C}^{\phi}(M)$ then $%
\underline{\gamma}=q_{G}\circ\gamma$ is a loop on $M/G$\ and we have $%
\mathrm{Hol}_{\phi}^{\Xi}(\gamma)=\mathrm{Hol}^{\underline{\Xi}}(\underline{%
\gamma})$.
\end{proposition}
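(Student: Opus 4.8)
The plan is to handle the two assertions separately: first confirm that $\underline{\Xi}$ is a genuine connection, then identify the two holonomies by pushing the horizontal lift of $\gamma$ down to the quotient.

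For the projectability statement I would recall that $\Xi$ descends under $\bar{q}_{G}$ precisely when it is $G$-basic, i.e. both $G$-invariant and horizontal for the $G$-action in the sense that $\iota_{X_{P}}\Xi=0$ for every $X\in\mathfrak{g}$. Since $\Xi$ is assumed $G$-invariant and $\mu_{X}^{\Xi}=-\tfrac{i}{2\pi}\Xi(X_{P})$, the hypothesis $\mu^{\Xi}=0$ is exactly the horizontality condition, so $\Xi$ descends to a unique form $\underline{\Xi}\in\Omega^{1}(P/G,i\mathbb{R})$ determined by $\bar{q}_{G}^{\ast}\underline{\Xi}=\Xi$. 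To check that $\underline{\Xi}$ is a connection on $P/G\rightarrow M/G$, I would use that the $G$- and $U(1)$-actions on $P$ commute (because $G$ acts by $U(1)$-bundle automorphisms), so the $U(1)$-action descends to $P/G$ and the fundamental vector field $\xi_{P}$ projects to $\xi_{P/G}$. The normalization $\underline{\Xi}(\xi_{P/G})=\Xi(\xi_{P})$ on the fundamental vector field and the $U(1)$-equivariance of $\underline{\Xi}$ then follow at once from $\bar{q}_{G}^{\ast}\underline{\Xi}=\Xi$ and the corresponding properties of $\Xi$.

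For the holonomy identity, fix $\gamma\in\mathcal{C}^{\phi}(M)$, put $x=\gamma(0)$, and note that $\underline{\gamma}=q_{G}\circ\gamma$ is a loop at $[x]$ since $\gamma(1)=\phi_{M}(x)$ lies in the $G$-orbit of $x$. Choose $y\in p^{-1}(x)$ and let $\overline{\gamma}^{y}$ be the $\Xi$-horizontal lift with $\overline{\gamma}^{y}(0)=y$. The heart of the argument is to show that $\bar{q}_{G}\circ\overline{\gamma}^{y}$ is the $\underline{\Xi}$-horizontal lift of $\underline{\gamma}$ starting at $\bar{q}_{G}(y)$: it covers $\underline{\gamma}$ because the square relating $p$, $\bar{q}_{G}$, $q_{G}$ and the projection $P/G\rightarrow M/G$ commutes, and it is $\underline{\Xi}$-horizontal because $\underline{\Xi}(\bar{q}_{G\ast}\dot{\overline{\gamma}}^{y})=(\bar{q}_{G}^{\ast}\underline{\Xi})(\dot{\overline{\gamma}}^{y})=\Xi(\dot{\overline{\gamma}}^{y})=0$.

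It then remains to evaluate endpoints. By the defining property (\ref{def}) we have $\overline{\gamma}^{y}(1)=\phi_{P}(y)\cdot\mathrm{Hol}_{\phi}^{\Xi}(\gamma)$; applying $\bar{q}_{G}$, using that the $U(1)$-factor descends and that $\phi_{P}(y)$ lies in the $G$-orbit of $y$, I obtain
\begin{equation*}
(\bar{q}_{G}\circ\overline{\gamma}^{y})(1)=\bar{q}_{G}(\phi_{P}(y))\cdot\mathrm{Hol}_{\phi}^{\Xi}(\gamma)=\bar{q}_{G}(y)\cdot\mathrm{Hol}_{\phi}^{\Xi}(\gamma)=(\bar{q}_{G}\circ\overline{\gamma}^{y})(0)\cdot\mathrm{Hol}_{\phi}^{\Xi}(\gamma).
\end{equation*}
Comparing this with the defining relation of the ordinary holonomy of the horizontal lift of $\underline{\gamma}$ gives $\mathrm{Hol}^{\underline{\Xi}}(\underline{\gamma})=\mathrm{Hol}_{\phi}^{\Xi}(\gamma)$. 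The step requiring the most care is the collapse $\bar{q}_{G}(\phi_{P}(y))=\bar{q}_{G}(y)$ combined with the descent of the $U(1)$-factor: it is precisely here that the equivariant holonomy — which measures the discrepancy between $\overline{\gamma}^{y}(1)$ and $\phi_{P}(y)$ — becomes the ordinary holonomy on the quotient, because the $G$-quotient identifies $\phi_{P}(y)$ with $y$. Everything else is bookkeeping with the commuting $G$- and $U(1)$-actions.
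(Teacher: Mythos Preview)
Your proof is correct and follows the same route as the paper: the paper's proof consists of the single observation that $\bar{q}_{G}(\overline{\gamma}^{y})$ is a $\underline{\Xi}$-horizontal lift of $\underline{\gamma}$, and you have simply unpacked this (together with the projectability claim) in full detail.
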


\begin{proof}
It follows from the fact that for any $y\in p^{-1}(\gamma(0))$ the curve $%
\bar{q}_{G}(\overline{\gamma}^{y})$ is a $\underline{\Xi}$-horizontal lift
of $\underline{\gamma}$.
\end{proof}

We note that the case commented in the Introduction of a proper action of a
discrete group $G$, is a particular case of the preceding proposition.

In the case in which $\mu^{\Xi}\neq0$, the connection $\Xi$\ is not
projectable to the quotient bundle. However, it is possible to obtain a
connection on the quotient using a connection $\Theta$ on the bundle $M%
\overset{q_{G}}{\rightarrow}M/G$. In more detail, we have the following
result (see \cite{flat})

\begin{proposition}
\label{quotcha} If $\Xi $ is a $G$-invariant connection on $p\colon
P\rightarrow M$ and $\Theta $ a connection on the (left) $G$-principal
bundle $q_{G}\colon M\rightarrow M/G$, then we define the $i\mathbb{R}$%
-valued $1$-form $\Xi (\Theta )(\xi )=\Xi ((\Theta (p_{\ast }\xi ))_{P})$, $%
\xi \in TP$. Then $\Xi -\Xi (\Theta )$ is projectable to $P/G$ and the
projection is a connection form $\underline{\Xi }_{\Theta }$ on $%
P/G\rightarrow M/G$.
\end{proposition}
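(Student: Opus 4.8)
The plan is to prove that the $i\mathbb{R}$-valued $1$-form $\Xi-\Xi(\Theta)$ on $P$ is \emph{basic} for the $G$-action — that is, both $G$-invariant and horizontal for the fundamental fields $X_{P}$, $X\in\mathfrak{g}$, of that action — so that it descends to a unique $1$-form $\underline{\Xi}_{\Theta}$ on $P/G$ with $\bar{q}_{G}^{*}\underline{\Xi}_{\Theta}=\Xi-\Xi(\Theta)$; and then to check that $\underline{\Xi}_{\Theta}$ satisfies the two defining properties of a connection form on the $U(1)$-bundle $P/G\rightarrow M/G$, namely $U(1)$-invariance and the reproducing property on the $U(1)$-fundamental field. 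Throughout I would use that the $G$- and $U(1)$-actions on $P$ commute and that $\bar{q}_{G}$ is $U(1)$-equivariant.

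First I would establish $G$-horizontality. Fix $X\in\mathfrak{g}$; since $p$ is $G$-equivariant we have $p_{*}X_{P}=X_{M}$, and since $\Theta$ is a connection on $q_{G}\colon M\rightarrow M/G$ it reproduces fundamental fields, $\Theta(X_{M})=X$. Hence $(\Theta(p_{*}X_{P}))_{P}=X_{P}$ and $\Xi(\Theta)(X_{P})=\Xi(X_{P})$, so $(\Xi-\Xi(\Theta))(X_{P})=0$. The same computation on the $U(1)$-fundamental field $\xi_{P}$ gives the reproducing property: as $\xi_{P}$ is $p$-vertical, $p_{*}\xi_{P}=0$, so $\Xi(\Theta)(\xi_{P})=0$, while $\Xi(\xi_{P})=i$; thus $(\Xi-\Xi(\Theta))(\xi_{P})=i$. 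Since $(\bar{q}_{G})_{*}\xi_{P}$ is the $U(1)$-fundamental field on $P/G$, this shows $\underline{\Xi}_{\Theta}$ reproduces it.

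Next I would treat the two invariance statements, which descend to $\underline{\Xi}_{\Theta}$ because $\bar{q}_{G}$ is $G$- and $U(1)$-equivariant. The $U(1)$-invariance of $\Xi-\Xi(\Theta)$ follows from the commutativity of the two actions: $\Xi$ is $U(1)$-invariant, and in $\Xi(\Theta)$ the element $\Theta(p_{*}\xi)$ is unchanged along the $U(1)$-fibers (since $p$ is $U(1)$-invariant) while each $X_{P}$ is $U(1)$-invariant (commuting flows), so $\Xi(\Theta)$ is $U(1)$-invariant as well. For the $G$-invariance of $\Xi(\Theta)$ I would combine three facts: the equivariance $\phi_{M}^{*}\Theta=\mathrm{Ad}_{\phi}\Theta$ of the connection $\Theta$; the identity $p\circ\phi_{P}=\phi_{M}\circ p$; and the transformation law $(\phi_{P})_{*}X_{P}(y)=(\mathrm{Ad}_{\phi}X)_{P}(\phi_{P}(y))$ of fundamental fields. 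Writing $X=\Theta(p_{*}\xi)$, these give $\Theta(p_{*}(\phi_{P})_{*}\xi)=\mathrm{Ad}_{\phi}X$ and hence $(\phi_{P}^{*}\Xi(\Theta))_{y}(\xi)=\Xi_{\phi_{P}(y)}((\mathrm{Ad}_{\phi}X)_{P}(\phi_{P}(y)))=\Xi_{\phi_{P}(y)}((\phi_{P})_{*}X_{P}(y))=\Xi_{y}(X_{P}(y))=\Xi(\Theta)_{y}(\xi)$, the last two equalities using the transformation law and the $G$-invariance of $\Xi$. Together with the invariance of $\Xi$ this yields the $G$-invariance of the difference.

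The step I expect to be the main obstacle is precisely this $G$-invariance of $\Xi(\Theta)$: it is the only place where the $\mathrm{Ad}$-equivariance of $\Theta$ and the transformation of the $G$-fundamental fields must be used together, and where the sign conventions — the minus sign in $X_{P}(x)=\left.\tfrac{d}{dt}\right\vert_{t=0}\exp(-tX)_{P}(x)$ and the resulting form $\phi_{M}^{*}\Theta=\mathrm{Ad}_{\phi}\Theta$ of the equivariance — must be tracked carefully so that the two $\mathrm{Ad}_{\phi}$-twists cancel. Once invariance and horizontality are in hand, the descent of $\Xi-\Xi(\Theta)$ to $\underline{\Xi}_{\Theta}$ and the verification that it is a connection form are formal.
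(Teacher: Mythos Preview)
Your argument is correct: showing that $\Xi-\Xi(\Theta)$ is $G$-basic (horizontal via $\Theta(X_{M})=X$ and $p_{*}X_{P}=X_{M}$, invariant via the $\mathrm{Ad}$-equivariance of $\Theta$ combined with the transformation law of fundamental fields and the $G$-invariance of $\Xi$) is exactly the right route, and the check that the descended form reproduces $\xi_{P}$ and is $U(1)$-invariant is clean. Your identification of the $G$-invariance of $\Xi(\Theta)$ as the delicate step, and the reason for it, is accurate.

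There is nothing to compare against in the paper itself: the proposition is stated with the parenthetical ``(see \cite{flat})'' and no proof is given---the result is imported from that reference. So you have supplied a self-contained proof where the paper defers to the literature. The argument in \cite{flat} follows the same basic-form strategy you outline; your write-up would serve as an in-line replacement for the citation.
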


The following result computes the holonomy of $\underline{\Xi}_{\Theta}$ in
terms of the $G$-equivariant holonomy of $\Xi$ and the holonomy of $\Theta$.

\begin{proposition}
\label{quotient}If $\gamma $ is a loop on $M/G$ and $\hat{\gamma}$ is a $%
\Theta $-horizontal lift of $\gamma $, then $\hat{\gamma}\in \mathcal{C}^{%
\mathrm{Hol}^{\Theta ,\hat{\gamma}(0)}(\gamma )}(M)$ and $\mathrm{Hol}^{%
\underline{\Xi }_{\Theta }}(\gamma )=\mathrm{Hol}_{\mathrm{Hol}^{\Theta ,%
\hat{\gamma}(0)}(\gamma )}^{\Xi }(\hat{\gamma})$.
\end{proposition}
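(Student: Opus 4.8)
The plan is to relate the $\underline{\Xi}_{\Theta}$-horizontal lift of $\gamma$ in $P/G$ to the $\Xi$-horizontal lift in $P$ of the curve $\hat{\gamma}$, exploiting the description of $\underline{\Xi}_{\Theta}$ as the projection of $\Xi-\Xi(\Theta)$ from Proposition \ref{quotcha}. The membership $\hat{\gamma}\in\mathcal{C}^{\phi}(M)$ with $\phi=\mathrm{Hol}^{\Theta,\hat{\gamma}(0)}(\gamma)$ is immediate: since $\gamma$ is a loop, $q_{G}(\hat{\gamma}(1))=\gamma(1)=\gamma(0)=q_{G}(\hat{\gamma}(0))$, so $\hat{\gamma}(0)$ and $\hat{\gamma}(1)$ lie in the same fiber of $M\rightarrow M/G$; freeness of the action gives a unique $\phi\in G$ with $\hat{\gamma}(1)=\phi_{M}(\hat{\gamma}(0))$, and this $\phi$ is by definition the holonomy $\mathrm{Hol}^{\Theta,\hat{\gamma}(0)}(\gamma)$ of $\Theta$ on the left principal $G$-bundle $M\rightarrow M/G$, which is exactly the condition $\hat{\gamma}\in\mathcal{C}^{\phi}(M)$.

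The key step is a horizontality identity. Fix $y\in p^{-1}(\hat{\gamma}(0))$ and let $\overline{\hat{\gamma}}^{y}$ be the $\Xi$-horizontal lift of $\hat{\gamma}$ starting at $y$, so that $p_{\ast}\dot{\overline{\hat{\gamma}}}^{y}=\dot{\hat{\gamma}}$. Because $\hat{\gamma}$ is $\Theta$-horizontal we have $\Theta(\dot{\hat{\gamma}})=0$, and therefore the correction term vanishes along the lift:
\begin{equation*}
\Xi(\Theta)(\dot{\overline{\hat{\gamma}}}^{y})=\Xi\big((\Theta(p_{\ast}\dot{\overline{\hat{\gamma}}}^{y}))_{P}\big)=\Xi\big((\Theta(\dot{\hat{\gamma}}))_{P}\big)=\Xi(0)=0.
\end{equation*}
Hence $(\Xi-\Xi(\Theta))(\dot{\overline{\hat{\gamma}}}^{y})=\Xi(\dot{\overline{\hat{\gamma}}}^{y})=0$, so the same curve $\overline{\hat{\gamma}}^{y}$ is simultaneously $\Xi$-horizontal and $(\Xi-\Xi(\Theta))$-horizontal. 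Since $\bar{q}_{G}^{\ast}\underline{\Xi}_{\Theta}=\Xi-\Xi(\Theta)$ by Proposition \ref{quotcha}, the projected curve $\underline{c}:=\bar{q}_{G}\circ\overline{\hat{\gamma}}^{y}$ is $\underline{\Xi}_{\Theta}$-horizontal; it covers $q_{G}\circ\hat{\gamma}=\gamma$, so it is an $\underline{\Xi}_{\Theta}$-horizontal lift of the loop $\gamma$.

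It remains to read off the endpoints of $\underline{c}$. At $t=0$ we have $\underline{c}(0)=\bar{q}_{G}(y)$. At $t=1$, equation (\ref{def}) applied to $\hat{\gamma}\in\mathcal{C}^{\phi}(M)$ gives $\overline{\hat{\gamma}}^{y}(1)=\phi_{P}(y)\cdot\mathrm{Hol}_{\phi}^{\Xi}(\hat{\gamma})$. The $G$-action and the $U(1)$-action on $P$ commute (the $G$-action is by $U(1)$-bundle automorphisms), so the $U(1)$-action descends through $\bar{q}_{G}$; since $\phi_{P}(y)$ and $y$ lie in the same $G$-orbit, we get $\bar{q}_{G}(\phi_{P}(y)\cdot u)=\bar{q}_{G}(\phi_{P}(y))\cdot u=\bar{q}_{G}(y)\cdot u$ for every $u\in U(1)$. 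Therefore
\begin{equation*}
\underline{c}(1)=\bar{q}_{G}\big(\phi_{P}(y)\cdot\mathrm{Hol}_{\phi}^{\Xi}(\hat{\gamma})\big)=\bar{q}_{G}(y)\cdot\mathrm{Hol}_{\phi}^{\Xi}(\hat{\gamma})=\underline{c}(0)\cdot\mathrm{Hol}_{\phi}^{\Xi}(\hat{\gamma}).
\end{equation*}
Comparing with the defining relation $\underline{c}(1)=\underline{c}(0)\cdot\mathrm{Hol}^{\underline{\Xi}_{\Theta}}(\gamma)$ yields $\mathrm{Hol}^{\underline{\Xi}_{\Theta}}(\gamma)=\mathrm{Hol}_{\phi}^{\Xi}(\hat{\gamma})$, as claimed.

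The main obstacle, and really the only nontrivial point, is the horizontality identity of the second paragraph: one must recognize that the extra term $\Xi(\Theta)$ in the definition of $\underline{\Xi}_{\Theta}$ is designed precisely to vanish on lifts of $\Theta$-horizontal curves, which is what allows a single curve in $P$ to serve at once as the $\Xi$-horizontal lift of $\hat{\gamma}$ and, after projection by $\bar{q}_{G}$, as the $\underline{\Xi}_{\Theta}$-horizontal lift of $\gamma$. Everything else reduces to the definition of holonomy on the left $G$-bundle $M\rightarrow M/G$ and the bookkeeping that the commuting $U(1)$-action passes cleanly to the quotient.
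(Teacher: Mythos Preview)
Your proof is correct and follows the same approach as the paper's: both show that $\bar{q}_{G}\circ\overline{\hat{\gamma}}^{y}$ is a $\underline{\Xi}_{\Theta}$-horizontal lift of $\gamma$ and then read off the endpoints. The paper simply asserts this horizontality, whereas you supply the missing justification by computing $\Xi(\Theta)(\dot{\overline{\hat{\gamma}}}^{y})=0$ from the $\Theta$-horizontality of $\hat{\gamma}$; this extra detail is welcome and does not change the argument.
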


\begin{proof}
By the definition of the holonomy of $\Theta $ we have $\hat{\gamma}(1)=\hat{%
\gamma}(0)\cdot \mathrm{Hol}^{\Theta ,\hat{\gamma}(0)}(\gamma )$, and hence $%
\hat{\gamma}\in \mathcal{C}^{\mathrm{Hol}^{\Theta ,\hat{\gamma}(0)}(\gamma
)}(M)$. For any $y\in p^{-1}(\hat{\gamma}(0))$ we have $\overline{\hat{\gamma%
}}^{y}(1)=\overline{\hat{\gamma}}^{y}(0)\cdot \mathrm{Hol}_{\mathrm{Hol}%
^{\Theta ,\hat{\gamma}(0)}(\gamma )}^{\Xi }(\hat{\gamma}).$

The curve $\bar{q}_{G}\circ \overline{\hat{\gamma}}^{y}$ is a $\underline{%
\Xi }_{\Theta }$-horizontal lift of $\gamma $ and $\bar{q}_{G}\circ 
\overline{\hat{\gamma}}^{y}(1)=\left( \bar{q}_{G}\circ \overline{\hat{\gamma}%
}^{y}(0)\right) \cdot \mathrm{Hol}_{\mathrm{Hol}^{\Theta ,\hat{\gamma}%
(0)}(\gamma )}^{\Xi }(\hat{\gamma})$. Hence $\mathrm{Hol}^{\underline{\Xi }%
_{\Theta }}(\gamma )=\mathrm{Hol}_{\mathrm{Hol}^{\Theta ,\hat{\gamma}%
(0)}(\gamma )}^{\Xi }(\hat{\gamma})$.
\end{proof}

\section{$G$-Flat connections\label{flat}}

A $G$-equivariant connection $\Xi$ is $G$-flat if $\mathrm{curv}_{G}(\Xi)=0$%
, i.e., if $\mathrm{curv}(\Xi)=0$ and $\mu^{\Xi}=0$. As commented in Section %
\ref{holonomy}, for flat connections the holonomy determines a homomorphism $%
\pi_{1}(M)\rightarrow U(1)$. For $G$-flat connections we have a similar
result, but with the $G$-equivariant fundamental group in place of $\pi
_{1}(M)$.

We define $\mathcal{C}_{x}^{G}(M)=\tbigcup \nolimits_{\phi\in G}\mathcal{C}%
_{x}^{\phi}(M)$. The $G$-equivariant fundamental group can be defined by $%
\pi_{1,G}(M)_{x}=\mathcal{C}_{x}^{G}(M)/\sim_{G}$\ where $%
(\phi,\gamma)\sim_{G}(\phi^{\prime},\gamma^{\prime})$ if there exist a curve 
$\varphi\colon I\rightarrow G$ , and a continuous map $h\colon I\times
I\rightarrow\mathbb{R}$, $(t,s)\mapsto$ $h_{t}(s)$ such that $h_{t}\in%
\mathcal{C}_{x}^{\varphi_{t}}(M)$ for any $t\in I$ and $\varphi_{0}=\phi$, $%
\varphi_{1}=\phi^{\prime}$, $h_{0}=\gamma$, $h_{1}=\gamma^{\prime}$. We
define the product in $\pi_{1,G}(M)_{x}$ by $[(\phi,\gamma)]\ast\lbrack
(\phi^{\prime},\gamma^{\prime})]=[(\phi\cdot\phi^{\prime},\gamma\ast(\phi
\cdot\gamma^{\prime})]$. As usual, the groups $\pi_{1,G}(M)_{x}$ for
different points $x$ are isomorphic, and hence we suppress the point $x$ in
the notation.

\begin{lemma}
\label{LemmaOrbita}If $\mu ^{\Xi }=0$ then for any curve $\varphi \colon
I\rightarrow G$ and $x\in M$ we have $\mathrm{Hol}_{\phi }^{\Xi }(\gamma )=1$%
, where $\phi =\varphi _{1}\cdot \varphi _{0}^{-1}$ and $\gamma (s)=(\varphi
_{s})_{M}(x)$.
\end{lemma}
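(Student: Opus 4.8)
The plan is to reduce the statement to the infinitesimal computation already carried out in Proposition \ref{derivadaHol}. First I would record that $\gamma$ really lies in $\mathcal{C}^{\phi}(M)$: since $\gamma(0)=(\varphi_0)_M(x)$ and $\gamma(1)=(\varphi_1)_M(x)=(\varphi_1\varphi_0^{-1})_M((\varphi_0)_M(x))=\phi_M(\gamma(0))$, the endpoint condition defining $\mathcal{C}^{\phi}(M)$ holds. Then I would introduce the one-parameter family of equivariant holonomies obtained by stopping the path at time $t$. Concretely, for $t\in I$ let $\gamma_t$ be the restriction of $\gamma$ to $[0,t]$ (reparametrized to $I$); the same endpoint computation shows $\gamma_t\in\mathcal{C}^{\varphi_t\varphi_0^{-1}}(M)$, and I set $F(t)=\mathrm{Hol}_{\varphi_t\varphi_0^{-1}}^{\Xi}(\gamma_t)\in U(1)$. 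Note that $F(0)=1$, because $\gamma_0$ is the constant curve, while $F(1)=\mathrm{Hol}_{\phi}^{\Xi}(\gamma)$ is exactly the quantity to be computed.

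The heart of the argument is to show that $F$ satisfies a differential equation whose right-hand side is proportional to the momentum. For small $\epsilon$, the curve $\gamma_{t+\epsilon}$ differs by a reparametrization from $\gamma_t\ast\delta_{t,\epsilon}$, where $\delta_{t,\epsilon}(s)=(\varphi_{t+s\epsilon})_M(x)$ traces the orbit piece from $\gamma(t)$ to $\gamma(t+\epsilon)$. Writing $x_t=(\varphi_t)_M(x)$ and $\psi_u=\varphi_{t+u}\varphi_t^{-1}$ (so that $\psi_0=e$), one has $\delta_{t,\epsilon}(s)=(\psi_{s\epsilon})_M(x_t)$, which is precisely the curve $\sigma_{x_t,\epsilon}$ appearing in Proposition \ref{derivadaHol} applied to the path $\psi$ based at $u=0$. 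Invariance of the holonomy under reparametrization, together with the concatenation rule of Proposition \ref{PropHol} b), then gives
\begin{equation*}
F(t+\epsilon)=F(t)\cdot\mathrm{Hol}_{\varphi_{t+\epsilon}\varphi_t^{-1}}^{\Xi}(\delta_{t,\epsilon}).
\end{equation*}
Differentiating this relation at $\epsilon=0$ and invoking Proposition \ref{derivadaHol} with $Y_t=\dot\psi_0=\left.\tfrac{d}{du}\right|_{u=0}(\varphi_{t+u}\varphi_t^{-1})$ yields $F'(t)=2\pi i\,\mu_{Y_t}^{\Xi}(x_t)\cdot F(t)$.

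Since $\mu^{\Xi}=0$ by hypothesis, the right-hand side vanishes identically, so $F$ is constant; hence $\mathrm{Hol}_{\phi}^{\Xi}(\gamma)=F(1)=F(0)=1$. The one point requiring a little care is the differentiability of $F$ and the correct matching of $\delta_{t,\epsilon}$ with the hypotheses of Proposition \ref{derivadaHol} — in particular the bookkeeping of the group elements $\varphi_{t+\epsilon}\varphi_t^{-1}$ and the reparametrization identifying $\gamma_{t+\epsilon}$ with $\gamma_t\ast\delta_{t,\epsilon}$; once this is set up correctly, the conclusion is immediate. I note that this argument also recovers Proposition \ref{InfHolonomy}, and more generally gives $\mathrm{Hol}_{\phi}^{\Xi}(\gamma)=\exp(2\pi i\int_0^1\mu_{Y_t}^{\Xi}(x_t)\,dt)$ for arbitrary $\mu^{\Xi}$, of which the present lemma is the special case $\mu^{\Xi}=0$.
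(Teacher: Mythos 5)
Your proof is correct and follows essentially the same route as the paper's: the paper also defines $k(t)=\mathrm{Hol}_{\varphi_{t}\cdot\varphi_{0}^{-1}}^{\Xi}(\gamma_{t})$ with $\gamma_{t}(s)=\gamma(st)$, splits off the short arc by reparametrization and Proposition \ref{PropHol} b), and kills the derivative via Proposition \ref{derivadaHol}. The only cosmetic difference is that you apply Proposition \ref{derivadaHol} directly at the moved point $x_{t}$ with the curve $u\mapsto\varphi_{t+u}\varphi_{t}^{-1}$, whereas the paper first uses Proposition \ref{PropHol} a) to conjugate the arc back to the base point $x$ and the curve $\varphi_{t_{0}}^{-1}\cdot\varphi$; both are legitimate instances of the same computation.
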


\begin{proof}
We define $\gamma _{t}(s)=\gamma (st)$ and $k(t)=\mathrm{Hol}_{\varphi
_{t}\cdot \varphi _{0}^{-1}}^{\Xi }(\gamma _{t})$. The result follows if we
prove that $\tfrac{dk}{dt}=0$. We fix $t_{0}\in I$ and we define $\gamma
_{t_{0},t}(s)=\gamma (t_{0}+s(t-t_{0}))$. The curves $\gamma _{t}$ and $%
\gamma _{t_{0}}\ast \gamma _{t_{0},t}$ differ by a reparametrization and
hence $k(t)=\mathrm{Hol}_{\varphi _{t}\cdot \varphi _{0}^{-1}}^{\Xi }(\gamma
_{t_{0}}\ast \gamma _{t_{0},t})=\mathrm{Hol}_{\varphi _{t_{0}}\cdot \varphi
_{0}^{-1}}^{\Xi }(\gamma _{t_{0}})\cdot \mathrm{Hol}_{\varphi _{t}\cdot
\varphi _{t_{0}}^{-1}}^{\Xi }(\gamma _{t_{0},t})$. Furthermore we have $%
\tfrac{dk}{dt}(t_{0})=\left. \tfrac{d}{dt}\right\vert _{t=0}k(t_{0}+t)=%
\mathrm{Hol}_{\varphi _{t_{0}}\cdot \varphi _{0}^{-1}}^{\Xi }(\gamma
_{t_{0}})\cdot \left. \tfrac{d}{dt}\right\vert _{t=t_{0}}\mathrm{Hol}%
_{\varphi _{t}\cdot \varphi _{t_{0}}^{-1}}^{\Xi }(\gamma _{t_{0},t})$.

Using Proposition \ref{PropHol} a) we obtain$\ \mathrm{Hol}%
_{\varphi_{t}\cdot\varphi_{t_{0}}^{-1}}^{\Xi}(\gamma_{t_{0},t})=\mathrm{Hol}%
_{\varphi
_{t_{0}}^{-1}\varphi_{t}}^{\Xi}(\varphi_{t_{0}}^{-1}\cdot\gamma_{t_{0},t})$.
The result follows by applying Proposition \ref{derivadaHol} to the curve $%
\varphi_{t_{0}}^{-1}\cdot\varphi$.
\end{proof}

\begin{proposition}
\label{HolGflat}Let $\Xi$ be a $G$-flat connection on $P\rightarrow M$. If $%
(\phi,\gamma)\sim_{G}(\phi^{\prime},\gamma^{\prime})$ then $\mathrm{Hol}%
_{\phi}^{\Xi}(\gamma)=\mathrm{Hol}_{\phi^{\prime}}^{\Xi}(\gamma^{\prime})$.
Hence the $G$-equivariant holonomy induces a group homomorphism $\mathrm{Hol}%
_{G}^{\Xi}\colon\pi_{1,G}(M)\rightarrow U(1)$.
\end{proposition}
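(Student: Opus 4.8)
The plan is to prove two things: that $\mathrm{Hol}_{\phi}^{\Xi}$ is constant on $\sim_{G}$-equivalence classes (well-definedness), and that the induced map respects the group product on $\pi_{1,G}(M)$. The homomorphism property is the quick half and follows directly from Proposition \ref{PropHol}. Given representatives $(\phi,\gamma)$ and $(\phi',\gamma')$ with $\gamma,\gamma'$ based at $x$, their product is represented by $(\phi\phi',\gamma\ast(\phi\cdot\gamma'))$. Here $\phi\cdot\gamma'\in\mathcal{C}_{\phi_{M}(x)}^{\phi\phi'\phi^{-1}}(M)$ starts at $\gamma(1)=\phi_{M}(x)$, so applying Proposition \ref{PropHol} b) gives $\mathrm{Hol}_{\phi\phi'}^{\Xi}(\gamma\ast(\phi\cdot\gamma'))=\mathrm{Hol}_{\phi}^{\Xi}(\gamma)\cdot\mathrm{Hol}_{\phi\phi'\phi^{-1}}^{\Xi}(\phi\cdot\gamma')$, and Proposition \ref{PropHol} a) identifies the last factor with $\mathrm{Hol}_{\phi'}^{\Xi}(\gamma')$. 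Thus $\mathrm{Hol}_{G}^{\Xi}$ is multiplicative once well-definedness is in hand.

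For well-definedness, suppose $(\phi,\gamma)\sim_{G}(\phi',\gamma')$ witnessed by a path $\varphi\colon I\to G$ and a homotopy $h$, $(t,s)\mapsto h_{t}(s)$, with $h_{t}\in\mathcal{C}_{x}^{\varphi_{t}}(M)$, $\varphi_{0}=\phi$, $\varphi_{1}=\phi'$, $h_{0}=\gamma$, $h_{1}=\gamma'$. The idea is to read off the boundary of the square $I\times I$ under $h$. The edge $s=0$ is the constant curve at $x$; the edges $t=0$ and $t=1$ are $\gamma$ and $\gamma'$; and the edge $s=1$ is the orbit curve $\omega(t)=h_{t}(1)=(\varphi_{t})_{M}(x)$, which by the definition of $\mathcal{C}^{\phi}(M)$ lies in $\mathcal{C}_{\phi_{M}(x)}^{\phi'\phi^{-1}}(M)$. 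Assembled with the correct orientations, the boundary reduces (the constant piece being trivial) to the loop $L=\gamma'\ast\overleftarrow{\omega}\ast\overleftarrow{\gamma}$ based at $x$.

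I would then evaluate $\mathrm{Hol}^{\Xi}(L)$ in two ways. Expanding via Proposition \ref{PropHol} b) and c) and tracking the group labels (they combine to $e$, confirming $L$ is a genuine loop) yields
\[
\mathrm{Hol}^{\Xi}(L)=\mathrm{Hol}_{\phi'}^{\Xi}(\gamma')\cdot\mathrm{Hol}_{\phi'\phi^{-1}}^{\Xi}(\omega)^{-1}\cdot\mathrm{Hol}_{\phi}^{\Xi}(\gamma)^{-1}.
\]
On the other hand, $h$ exhibits $L$ as null-homotopic, so the first half of $G$-flatness, $\mathrm{curv}(\Xi)=0$, forces $\mathrm{Hol}^{\Xi}(L)=1$ (a flat connection has trivial holonomy on contractible loops, by Proposition \ref{HolonomyInt}). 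The orbit factor is killed by the second half: since $\mu^{\Xi}=0$, Lemma \ref{LemmaOrbita} applied to the path $\varphi$ gives $\mathrm{Hol}_{\phi'\phi^{-1}}^{\Xi}(\omega)=1$. Substituting leaves $1=\mathrm{Hol}_{\phi'}^{\Xi}(\gamma')\cdot\mathrm{Hol}_{\phi}^{\Xi}(\gamma)^{-1}$, i.e. $\mathrm{Hol}_{\phi}^{\Xi}(\gamma)=\mathrm{Hol}_{\phi'}^{\Xi}(\gamma')$.

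The step I expect to be the main obstacle is the geometric bookkeeping of $L$ together with the justification that flatness gives $\mathrm{Hol}^{\Xi}(L)=1$: the image $h(I\times I)$ need not be an embedded surface, so rather than invoking Proposition \ref{HolonomyInt} on a possibly singular $\Sigma$ directly, the cleanest route is to observe that $h$ restricted to the boundary of the square is an explicit null-homotopy of $L$ in $M$, and that flat holonomy is a homotopy invariant. Everything else is the careful separation of the two vanishing conditions $\mathrm{curv}(\Xi)=0$ and $\mu^{\Xi}=0$ constituting $G$-flatness, handled respectively by the contractibility argument and by Lemma \ref{LemmaOrbita}.
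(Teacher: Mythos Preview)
Your argument is correct and follows essentially the same route as the paper: decompose the boundary of the homotopy square into $\gamma$, the orbit curve, and $\gamma'$ reversed (the paper uses the opposite orientation $\gamma\ast\sigma\ast\overleftarrow{\gamma}'$, which is cosmetic), use flatness to make the loop holonomy trivial, and invoke Lemma \ref{LemmaOrbita} to kill the orbit factor. Your worry about $h(I\times I)$ not being an embedded surface is resolved in the paper simply by pulling back the curvature to the square and writing $\mathrm{Hol}^{\Xi}(\partial h)=\exp\bigl(2\pi i\int_{I\times I}h^{\ast}\mathrm{curv}(\Xi)\bigr)=1$, which sidesteps any regularity issue.
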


\begin{proof}
Let $\varphi\colon I\rightarrow G$ and $h\colon I\times I\rightarrow \mathbb{%
R}$ ,$(t,s)\mapsto$ $h_{t}(s)$ such that $h_{t}\in\mathcal{C}%
_{x}^{\varphi_{t}}(M)$ for any $t\in I$ and $\varphi_{0}=\phi$, $\varphi
_{1}=\phi^{\prime}$, $h_{0}=\gamma$, $h_{1}=\gamma^{\prime}$. We define $%
\sigma(s)=(\varphi_{s})_{M}(x)$ and we have $\sigma\in\mathcal{C}%
^{\phi^{\prime}\cdot\phi^{-1}}(M)$ and $\partial h=\gamma\ast\sigma \ast%
\overleftarrow{\gamma}^{\prime}$.As $\Xi$ is flat we have $\mathrm{Hol}%
_{e}^{\Xi}(\gamma\ast\sigma\ast\overleftarrow{\gamma}^{\prime})=\exp(2\pi
i\int_{I\times I}h^{\ast}\mathrm{curv}(\Xi))=1$. But by Proposition \ref%
{PropHol} we also have $\mathrm{Hol}_{e}^{\Xi}(\gamma\ast\sigma \ast%
\overleftarrow{\gamma}^{\prime})=\mathrm{Hol}_{\phi}^{\Xi}(\gamma )\cdot%
\mathrm{Hol}_{\phi^{\prime}\cdot\phi^{-1}}^{\Xi}(\sigma)\cdot \mathrm{Hol}%
_{\phi^{\prime}}^{\Xi}(\gamma^{\prime})^{-1}$. The result follows because $%
\mathrm{Hol}_{\phi^{\prime}\cdot\phi^{-1}}^{\Xi}(\sigma)=1$ by Lemma \ref%
{LemmaOrbita}.
\end{proof}

The projection $(\phi,\gamma)\mapsto\phi$ induces an epimorphisms $\pi
_{1,G}(M)\longrightarrow\pi_{0}(G)$. In a similar way, the inclusion map $%
\mathcal{C}_{x}(M)\rightarrow\mathcal{C}_{x}^{G}(M)$ indices a homomorphism%
\footnote{%
We recall that it is equivalent to define the fundamental group $\pi_{1}(M)$
by using continous or differentiable curves (e.g. see \cite[17.8.1]{BT}).} $%
\pi_{1}(M)\rightarrow\pi_{1,G}(M)$. It can be seen that we have an exact
sequence 
\begin{equation}
\pi_{1}(M)\rightarrow\pi_{1,G}(M)\rightarrow\pi_{0}(G)\rightarrow1.
\label{exact}
\end{equation}

\begin{remark}
It is possible to give another interpretation of this result in terms of the
Borel model of equivariant cohomology\ \ If $EG\rightarrow BG$ is a
universal bundle for $G$ then we define the homotopy quotient $%
M_{G}=(M\times EG)/G$.\ Then $\mathrm{pr}_{1}^{\ast }\Xi $ is a $G$-flat
connection on $P\times EG\rightarrow M\times EG$ and it projects onto a flat
connection $\Xi _{G}$ on $P_{G}\rightarrow M_{G}$. As $\Xi _{G}$ is flat,
its holonomy defines a homomorphism $\pi _{1}(M_{G})\rightarrow U(1)$ that
corresponds to the one defined in Proposition \ref{HolGflat}. It can be seen
that we have isomorphisms $\pi _{1,G}(M)\simeq \pi _{1,G}(M\times EG)\simeq
\pi _{1}(M_{G})$. Furthermore, the homotopy exact sequence induces the
following exact sequence $\pi _{1}(G)\rightarrow \pi _{1}(M)\rightarrow \pi
_{1}(M_{G})\rightarrow \pi _{0}(G)\rightarrow 1$. We prefer to work with $%
\pi _{1,G}(M)$ in place of $\pi _{1}(M_{G})$ because it is defined in terms
of curves on $M$ and $G$, and hence it can be related directly with the
equivariant holonomy.
\end{remark}

\section{Classification of equivariant $U(1)$-bundles by their equivariant
holonomy}

In this section we obtain the equivariant versions of the results of Section %
\ref{holonomy}. If $p\colon P\rightarrow M$, $p^{\prime }\colon P^{\prime
}\rightarrow M$\ are\ two $G$-equivariant $U(1)$-bundles then we write that $%
P\simeq _{G}P^{\prime }$ if there exists a $G$-equivariant $U(1)$-bundle
isomorphism $\Phi \colon P^{\prime }\rightarrow P$ covering the identity map
of $M$. We say that $P$ is a trivial $G$-equivariant $U(1)$-bundle if $%
P\simeq _{G}M\times U(1)$ for an action of $G$ on $M$ and where $G$ acts
trivially on $U(1)$. A $G$-equivariant $U(1)$-bundle with connection is a
pair $(P,\Xi )$, where $p\colon P\rightarrow M$ is a $G$-equivariant $U(1)$%
-bundle and $\Xi $ is a $G$-invariant connection on $P$. We write that $%
(P,\Xi )\simeq _{G}(P,\Xi )$ if there exists a $G$-equivariant $U(1)$-bundle
isomorphism $\Phi \colon P^{\prime }\rightarrow P$ covering the identity map
of $M$ such that \ $\Phi ^{\ast }\Xi =\Xi ^{\prime }$.

\begin{theorem}
\label{IsoEqui}If $(P,\Xi)$ and $(P^{\prime},\Xi^{\prime})$ are $G$%
-equivariant $U(1)$-bundles with connection over $M$, then $(P,\Xi)\simeq
_{G}(P^{\prime},\Xi^{\prime})$ if and only if $\mathrm{Hol}_{\phi}^{\Xi
}(\gamma)=\mathrm{Hol}_{\phi}^{\Xi^{\prime}}(\gamma)$ for any $\phi\in G$,
and $\gamma\in\mathcal{C}^{\phi}(M)$.
\end{theorem}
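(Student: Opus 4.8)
The plan is to prove the two directions separately, with the forward direction being routine and the converse being the substantive part where the holonomy data is used to actually construct the isomorphism.

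The plan is to prove the two implications separately, the forward one being immediate and the converse requiring an actual construction of the equivariant isomorphism.

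For the forward implication, suppose $(P,\Xi)\simeq_G(P^{\prime},\Xi^{\prime})$ via a $G$-equivariant isomorphism $\Phi\colon P^{\prime}\rightarrow P$ covering $\mathrm{id}_M$ with $\Phi^{\ast}\Xi=\Xi^{\prime}$. This is exactly the situation of Proposition \ref{PropHol} f) with $\underline{\Phi}=\mathrm{id}_M$, so $\mathrm{Hol}_{\phi}^{\Xi^{\prime}}(\gamma)=\mathrm{Hol}_{\phi}^{\Xi}(\mathrm{id}_M\circ\gamma)=\mathrm{Hol}_{\phi}^{\Xi}(\gamma)$ for every $\phi\in G$ and $\gamma\in\mathcal{C}^{\phi}(M)$, which is the asserted equality of equivariant holonomies.

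For the converse I would first specialize the hypothesis to $\phi=e$. Since $\mathrm{Hol}_{e}^{\Xi}=\mathrm{Hol}^{\Xi}$ is the ordinary holonomy, the ordinary holonomies of $\Xi$ and $\Xi^{\prime}$ agree on all loops, so by Theorem \ref{Clas} there exists a (not necessarily $G$-equivariant) $U(1)$-bundle isomorphism $\Phi\colon P^{\prime}\rightarrow P$ covering $\mathrm{id}_M$ with $\Phi^{\ast}\Xi=\Xi^{\prime}$. I would then record the uniqueness of such a $\Phi$ up to a global constant: any two connection-preserving isomorphisms covering $\mathrm{id}_M$ differ by a vertical $U(1)$-automorphism of $P^{\prime}$ preserving $\Xi^{\prime}$, i.e.\ by multiplication by a map $f\colon M\rightarrow U(1)$ whose logarithmic derivative must vanish (compare the change-of-connection description in Proposition \ref{PropChangeConn copy(1)}), so that $f$ is constant because $M$ is connected.

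The heart of the argument is to upgrade this $\Phi$ to a $G$-equivariant one. For fixed $\phi\in G$ I would consider $\Phi_{\phi}:=\phi_P^{-1}\circ\Phi\circ\phi_{P^{\prime}}\colon P^{\prime}\rightarrow P$. Using $G$-invariance of $\Xi$ and $\Xi^{\prime}$ together with $\Phi^{\ast}\Xi=\Xi^{\prime}$ one checks $\Phi_{\phi}^{\ast}\Xi=\Xi^{\prime}$, so $\Phi_{\phi}$ is again a connection-preserving isomorphism covering $\mathrm{id}_M$; by the uniqueness just noted, $\Phi_{\phi}(y^{\prime})=\Phi(y^{\prime})\cdot c(\phi)$ for a constant $c(\phi)\in U(1)$, and $\Phi$ is $G$-equivariant precisely when $c(\phi)=1$ for all $\phi$. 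To evaluate $c(\phi)$ I would apply $\Phi$ to a horizontal lift: as $\Phi^{\ast}\Xi=\Xi^{\prime}$ and $\Phi$ covers $\mathrm{id}_M$, it carries the $\Xi^{\prime}$-horizontal lift of $\gamma$ through $y^{\prime}$ to the $\Xi$-horizontal lift of $\gamma$ through $\Phi(y^{\prime})$, for any $\gamma\in\mathcal{C}_x^{\phi}(M)$ and $y^{\prime}\in(p^{\prime})^{-1}(x)$. Evaluating at the endpoint $s=1$ and using the defining property (\ref{def}) in both bundles, together with $\Phi(\phi_{P^{\prime}}(y^{\prime}))=\phi_P(\Phi(y^{\prime}))\cdot c(\phi)$, yields after cancelling $\phi_P(\Phi(y^{\prime}))$ the identity $c(\phi)\cdot\mathrm{Hol}_{\phi}^{\Xi^{\prime}}(\gamma)=\mathrm{Hol}_{\phi}^{\Xi}(\gamma)$. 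Since the hypothesis gives $\mathrm{Hol}_{\phi}^{\Xi^{\prime}}(\gamma)=\mathrm{Hol}_{\phi}^{\Xi}(\gamma)$, I conclude $c(\phi)=1$, so $\Phi$ is $G$-equivariant and $(P,\Xi)\simeq_G(P^{\prime},\Xi^{\prime})$. Here one only needs that $\mathcal{C}_x^{\phi}(M)$ is nonempty for each $\phi$, which holds because $M$ is connected, hence path-connected.

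The main obstacle is this final step: Theorem \ref{Clas} produces only an isomorphism of ordinary bundles with connection, and the entire content of the theorem is that the equivariant holonomy for $\phi\neq e$ is exactly what detects, and forces to vanish, the failure of that isomorphism to intertwine the $G$-actions, namely the defect $c\colon G\rightarrow U(1)$.
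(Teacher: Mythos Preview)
Your proof is correct and follows essentially the same approach as the paper: reduce to the ordinary classification (Theorem \ref{Clas}) at $\phi=e$ to produce a connection-preserving isomorphism $\Phi$, then use that $\Phi$ carries $\Xi^{\prime}$-horizontal lifts to $\Xi$-horizontal lifts together with the defining equation (\ref{def}) to verify $G$-equivariance. The only difference is cosmetic: you package the failure of equivariance as a defect $c(\phi)$ obtained via the uniqueness-up-to-constant of $\Phi$, whereas the paper computes $\Phi(\phi_{P}(y))=\phi_{P^{\prime}}(\Phi(y))$ directly in one line from (\ref{def}), so your uniqueness discussion, while correct, is not actually needed.
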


\begin{proof}
That equivalent bundles have the same equivariant holonomy follows from
Proposition \ref{PropHol} f). We prove the converse. If $\mathrm{Hol}_{\phi
}^{\Xi }(\gamma )=\mathrm{Hol}_{\phi }^{\Xi ^{\prime }}(\gamma )$ for any $%
\phi \in G$, and $\gamma \in \mathcal{C}^{\phi }(M)$, in particular we have $%
\mathrm{Hol}^{\Xi }(\gamma )=\mathrm{Hol}^{\Xi ^{\prime }}(\gamma )$ for any 
$\gamma \in \mathcal{C}^{e}(M)=\mathcal{C}(M)$ and by Theorem \ref{Clas}
there exists a a $U(1)$-bundle isomorphism $\Phi \colon P\rightarrow
P^{\prime }$ covering the identity map of $M$ such that $\Phi ^{\ast }(\Xi
^{\prime })=\Xi $. We prove that $\Phi $ is $G$-equivariant. If $\overline{%
\gamma }^{y}$ is the $\Xi $-horizontal lift of $\gamma \in \mathcal{C}^{\phi
}(M)$ starting at $y$, then $\Phi \circ \overline{\gamma }^{y}$ coincides
with the $\Xi ^{\prime }$-horizontal lift $\overline{\gamma }^{\prime \Phi
(y)}$\ of $\gamma $. By applying equation (\ref{def}) we obtain 
\begin{align*}
\Phi (\phi _{P}(y))& =\Phi (\overline{\gamma }^{y}(1)\cdot \mathrm{Hol}%
_{\phi }^{\Xi }(\gamma )^{-1})=\Phi (\overline{\gamma }^{y}(1))\cdot \mathrm{%
Hol}_{\phi }^{\Xi }(\gamma )^{-1} \\
& =\overline{\gamma }^{\prime \Phi (y)}(1)\cdot \mathrm{Hol}_{\phi }^{\Xi
^{\prime }}(\gamma )^{-1}=\phi _{P^{\prime }}(\Phi (y)).
\end{align*}
\end{proof}

\begin{proposition}
\label{PropChangeConn}If $\Xi$ and $\Xi^{\prime}$ are $G$-equivariant
connections on $P\rightarrow M$, then we have $\Xi^{\prime}=\Xi-2\pi
i(p^{\ast}\rho)$ for a $G$-invariant form $\rho\in\Omega^{1}(M)$ and $%
\mathrm{Hol}_{\phi}^{\Xi^{\prime}}(\gamma)=\mathrm{Hol}_{\phi}^{\Xi}(\gamma)%
\cdot\exp(2\pi i\int_{\gamma}\rho)$.
\end{proposition}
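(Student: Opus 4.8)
The plan is to follow the same two-step strategy used in the non-equivariant Proposition \ref{PropChangeConn copy(1)}: first identify the difference of the two connections as the pullback of a $1$-form on $M$, and then track the effect of this difference on the horizontal lifts that appear in the defining relation (\ref{def}) of the equivariant holonomy. First I would establish the stated form of $\Xi'$. Since $\Xi$ and $\Xi'$ are both connection forms on the same $U(1)$-bundle, their difference $\Xi'-\Xi$ is a horizontal, $U(1)$-invariant $i\mathbb{R}$-valued $1$-form, hence of the shape $-2\pi i\,p^{\ast}\rho$ for a unique $\rho\in\Omega^{1}(M)$; this is exactly Proposition \ref{PropChangeConn copy(1)} applied to the underlying bundle. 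It then remains to check that $\rho$ is $G$-invariant. This follows by applying $\phi_{P}^{\ast}$ to the identity $\Xi'-\Xi=-2\pi i\,p^{\ast}\rho$: using the $G$-invariance of both connections together with $p\circ\phi_{P}=\phi_{M}\circ p$ one obtains $p^{\ast}(\phi_{M}^{\ast}\rho)=p^{\ast}\rho$, and since $p^{\ast}$ is injective on forms this forces $\phi_{M}^{\ast}\rho=\rho$ for every $\phi\in G$.

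The heart of the argument is to compare horizontal lifts. I would fix $\gamma\in\mathcal{C}_{x}^{\phi}(M)$ and $y\in p^{-1}(x)$, let $\overline{\gamma}^{y}$ be the $\Xi$-horizontal lift with $\overline{\gamma}^{y}(0)=y$, and show that
\begin{equation*}
\delta(t)=\overline{\gamma}^{y}(t)\cdot\exp\Bigl(2\pi i\int_{0}^{t}\rho_{\gamma(s)}(\dot{\gamma}(s))\,ds\Bigr)
\end{equation*}
is precisely the $\Xi'$-horizontal lift of $\gamma$ starting at $y$. To verify this I would compute $\Xi'(\dot{\delta})$. Because $U(1)$ is abelian, right-translating a curve by a $U(1)$-valued function $c(t)$ adds the Maurer-Cartan term $c^{-1}\dot{c}$ to $\Xi(\dot{\overline{\gamma}}^{y})$, which vanishes since $\overline{\gamma}^{y}$ is $\Xi$-horizontal; subtracting $2\pi i\,(p^{\ast}\rho)(\dot{\delta})=2\pi i\,\rho_{\gamma}(\dot{\gamma})$ (using $p\circ\delta=\gamma$) then cancels the Maurer-Cartan term exactly, by the choice $c(t)=\exp(2\pi i\int_{0}^{t}\rho_{\gamma}(\dot{\gamma}))$, giving $\Xi'(\dot{\delta})=0$. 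Since $\delta(0)=y$, this identifies $\delta=\overline{\gamma}^{\prime y}$, whence $\overline{\gamma}^{\prime y}(1)=\overline{\gamma}^{y}(1)\cdot\exp(2\pi i\int_{\gamma}\rho)$.

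Finally I would conclude by inserting this into (\ref{def}). Writing the defining relation for both connections gives $\overline{\gamma}^{y}(1)=\phi_{P}(y)\cdot\mathrm{Hol}_{\phi}^{\Xi}(\gamma)$ and $\overline{\gamma}^{\prime y}(1)=\phi_{P}(y)\cdot\mathrm{Hol}_{\phi}^{\Xi'}(\gamma)$; combining these with the relation just obtained and cancelling $\phi_{P}(y)$ (the $U(1)$-action being free) yields $\mathrm{Hol}_{\phi}^{\Xi'}(\gamma)=\mathrm{Hol}_{\phi}^{\Xi}(\gamma)\cdot\exp(2\pi i\int_{\gamma}\rho)$. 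The only genuinely delicate point is the horizontality verification in the middle step, where the abelian nature of $U(1)$ is what makes the Maurer-Cartan correction cancel cleanly against the $p^{\ast}\rho$ term; everything else is bookkeeping with (\ref{def}). I would also remark that this simultaneously recovers the non-equivariant Proposition \ref{PropChangeConn copy(1)} as the special case $\phi=e$, where $\gamma$ is an ordinary loop.
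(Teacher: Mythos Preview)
Your proof is correct and follows exactly the same route as the paper: the paper's entire argument is the single sentence that if $\overline{\gamma}$ is a $\Xi$-horizontal lift of $\gamma$ then $\overline{\gamma}^{\prime}(s)=\overline{\gamma}(s)\cdot\exp(2\pi i\int_{0}^{s}\rho_{\gamma(s)}(\dot{\gamma}(s))\,ds)$ is a $\Xi'$-horizontal lift, which is precisely your curve $\delta$. You have simply supplied the details the paper leaves implicit---the $G$-invariance of $\rho$, the horizontality computation, and the substitution into (\ref{def}).
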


\begin{proof}
It follows from the fact that if $\overline{\gamma}$ is a $\Xi$-horizontal
lift of $\gamma$, then $\overline{\gamma}^{\prime}(s)=\overline{\gamma }%
(s)\cdot\exp(2\pi i\int_{0}^{s}\rho_{\gamma(s)}(\dot{\gamma}(s))ds)$ is a $%
\Xi^{\prime}$-horizontal lift of $\gamma$.
\end{proof}

\bigskip

The following result generalizes to arbitrary $U(1)$-bundles the result used
in \cite{AnomaliesG} to study anomaly cancellation

\begin{theorem}
\label{anomaly}Let $(P,\Xi)$ be a $G$-equivariant $U(1)$-bundle with
connection over $M$. Then $P\rightarrow M$ is a trivial $G$-equivariant $%
U(1) $-bundle if and only if there exists a $G$-invariant $1$-form $\beta
\in\Omega^{1}(M)^{G}$ such that $\mathrm{Hol}_{\phi}^{\Xi}(\gamma
)=\int_{\gamma}\beta$ for any $\phi\in G$ and $\gamma\in\mathcal{C}%
^{\phi}(M) $.
\end{theorem}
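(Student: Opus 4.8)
The plan is to compare the given pair $(P,\Xi)$ with an explicit model for the trivial $G$-equivariant bundle and then to invoke the equivariant classification Theorem \ref{IsoEqui}. First I would fix the model: on $P_{0}=M\times U(1)$ let $G$ act trivially on the $U(1)$-factor, so that $\phi_{P_{0}}(x,u)=(\phi_{M}(x),u)$, and given $\beta\in\Omega^{1}(M)^{G}$ consider the connection $\Xi_{\beta}=\vartheta-2\pi i\,\mathrm{pr}_{M}^{\ast}\beta$. Because $G$ acts trivially on the fibre and $\beta$ is $G$-invariant, $\Xi_{\beta}$ is $G$-invariant, and the associated cocycle is $\theta_{\phi}\equiv 1$. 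Using Lemma \ref{horizontalLift}a), the $\Xi_{\beta}$-horizontal lift of $\gamma\in\mathcal{C}_{x}^{\phi}(M)$ starting at $(x,u)$ ends at $(\phi_{M}(x),u\cdot\exp(2\pi i\int_{\gamma}\beta))$; comparing with $\phi_{P_{0}}(x,u)=(\phi_{M}(x),u)$ and the defining property (\ref{def}) yields $\mathrm{Hol}_{\phi}^{\Xi_{\beta}}(\gamma)=\exp(2\pi i\int_{\gamma}\beta)$ for all $\phi\in G$ and $\gamma\in\mathcal{C}^{\phi}(M)$. This holonomy computation is the heart of the argument; everything else is formal.

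For the forward implication, suppose $P$ is a trivial $G$-equivariant bundle, witnessed by a $G$-equivariant isomorphism $\Phi\colon P_{0}\rightarrow P$ over the identity. Then $\Phi^{\ast}\Xi$ is a $G$-invariant connection on $P_{0}$; since any connection on the trivial bundle has the form $\vartheta-2\pi i\,\mathrm{pr}_{M}^{\ast}\rho$ (cf. (\ref{trivialization})) it equals $\Xi_{\beta}$ for some $\beta\in\Omega^{1}(M)$, and invariance of $\vartheta$ together with surjectivity of $\mathrm{pr}_{M}$ forces $\beta\in\Omega^{1}(M)^{G}$. Proposition \ref{PropHol}f) (with $\underline{\Phi}=\mathrm{id}$) then gives $\mathrm{Hol}_{\phi}^{\Xi}(\gamma)=\mathrm{Hol}_{\phi}^{\Phi^{\ast}\Xi}(\gamma)=\mathrm{Hol}_{\phi}^{\Xi_{\beta}}(\gamma)=\exp(2\pi i\int_{\gamma}\beta)$, as desired.

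For the converse, I would take the $\beta$ provided by the hypothesis and form the model pair $(P_{0},\Xi_{\beta})$. By the computation above its equivariant holonomy agrees with that of $(P,\Xi)$ on every $\gamma\in\mathcal{C}^{\phi}(M)$ and every $\phi\in G$. Theorem \ref{IsoEqui} then yields $(P,\Xi)\simeq_{G}(P_{0},\Xi_{\beta})$; in particular $P\simeq_{G}M\times U(1)$ with the trivial fibre action, so $P$ is a trivial $G$-equivariant bundle.

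The only delicate point is the model computation: one must check that the trivial $G$-action on the $U(1)$-factor really produces the trivial cocycle $\theta_{\phi}\equiv 1$, so that the equivariant holonomy collapses to the ordinary holonomy $\exp(2\pi i\int_{\gamma}\beta)$ of $\Xi_{\beta}$ rather than acquiring the cocycle correction $\theta_{\phi}(x)^{-1}$ appearing in the local formula (\ref{HolLocal}). Once this is established, the two implications follow immediately from Proposition \ref{PropHol}f) and Theorem \ref{IsoEqui}, respectively.
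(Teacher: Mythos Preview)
Your proof is correct and follows essentially the same route as the paper's. The only cosmetic difference is in the converse: the paper shifts the connection on $P$ to $\Xi'=\Xi+2\pi i\,p^{\ast}\beta$ (via Proposition~\ref{PropChangeConn}) so that its equivariant holonomy becomes identically $1$, and then compares $(P,\Xi')$ with $(M\times U(1),\vartheta)$ using Theorem~\ref{IsoEqui}; you instead leave $\Xi$ untouched and compare $(P,\Xi)$ directly with the model $(P_{0},\Xi_{\beta})$. These are the same argument up to which side of the comparison absorbs the $\beta$-shift.
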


\begin{proof}
If $P\rightarrow M$ is a trivial $G$-equivariant $U(1)$-bundle, we can
choose a global trivialization $\Psi\colon M\times U(1)\longrightarrow P$
with $\theta_{\phi}^{\Psi}(x)=1$ and hence $\mathrm{Hol}_{\phi}^{\Xi}(\gamma
)=\exp(2\pi i\tint _{\gamma}\rho^{\Psi})$ and we can take $\beta=\rho^{\Psi}$%
.

Conversely, if $\mathrm{Hol}_{\phi }^{\Xi }(\gamma )=\exp (2\pi
i\tint_{\gamma }\beta )$, then we can define a new $G$-invariant connection $%
\Xi ^{\prime }=\Xi +2\pi i\beta $ and by Proposition \ref{PropChangeConn} we
have $\mathrm{Hol}_{\phi }^{\Xi ^{\prime }}(\gamma )=1=\mathrm{Hol}_{\phi
}^{\vartheta }(\gamma )$ for any $\phi \in G$ and $\gamma \in \mathcal{C}%
^{\phi }(M)$, and by Theorem \ref{IsoEqui} we have $(P,\Xi ^{\prime })\simeq
_{G}(M\times U(1),\vartheta )$. In particular $P\simeq _{G}M\times U(1)$.
\end{proof}

\section{Equivariant prequantization and equivariant holonomy}

A $D$-closed equivariant $2$-form $\varpi =\omega +\mu \in \Omega
_{G}^{2}(M) $ is $G$-equivariant prequantizable if there exist a $G$%
-equivariant $U(1)$-bundle with connection $(P,\Xi )$ such that $\mathrm{curv%
}_{G}(\Xi )=\varpi $. By Weil-Kostant theorem (e.g. see \cite{Kostant}) a
necessary condition is that $\omega $ should be integral, but it is known
that there could be additional obstructions (e.g. see \cite{Mundet}).
Necessary conditions for equivariant prequantizability follow from
Proposition \ref{InfHolonomy}. If $(P,\Xi )$ is a $G$-equivariant
prequantization of $\varpi $ and $X\in \mathfrak{g}$ satisfies $\exp (X)=e$,
then by Proposition \ref{InfHolonomy}\ we have $\exp (2\pi i\mu _{X}^{\Xi
}(x))=\mathrm{Hol}^{\Xi }(\tau _{x,X})$ for any $x\in M$, where $\tau
_{x,X}(t)=\exp (tX)_{M}(x)$ is the curve defined in Proposition \ref%
{InfHolonomy}. If this condition is not satisfied, then $\varpi $ is not $G$%
-equivariant prequantizable. We apply this condition to Example 1.

\textbf{Example 1 (continuation): }We consider again the case of $%
S^{2}\subset\mathbb{R}^{3}$ and the action of $G=SO(3)$. We have seen that $%
\varpi=\frac{1}{2\pi}\mathrm{vol}_{g}+\frac{1}{2\pi}h\in%
\Omega_{SO(3)}^{2}(S^{2})$ is $SO(3)$-equivariant prequantizable. The form $%
\frac{1}{4\pi }\mathrm{vol}_{g}$ is integral, and hence it is
prequantizable. However, we have the following result

\begin{proposition}
The form $\varpi^{\prime}=\frac{1}{4\pi}\mathrm{vol}_{g}+\frac{1}{4\pi}%
h\in\Omega_{SO(3)}^{2}(S^{2})$ is not $SO(3)$-equivariant prequantizable.
\end{proposition}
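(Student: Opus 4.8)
The plan is to derive a contradiction from the necessary condition for equivariant prequantizability recorded just before the statement, which comes from Proposition \ref{InfHolonomy}. Suppose, for contradiction, that $(P,\Xi)$ is an $SO(3)$-equivariant prequantization of $\varpi'$, so that $\mathrm{curv}_{SO(3)}(\Xi)=\varpi'$; comparing the two homogeneous components this means $\mathrm{curv}(\Xi)=\frac{1}{4\pi}\mathrm{vol}_g$ and $\mu^{\Xi}=\frac{1}{4\pi}h$. The idea is that the ordinary holonomy of a loop of the form $\tau_{x,X}$ with $\exp(X)=e$ is completely determined by $\mathrm{curv}(\Xi)$ (via Gauss--Bonnet, since $S^2$ is simply connected so every loop bounds), while the necessary condition forces it to equal $\exp(2\pi i\mu_X^{\Xi}(x))$, which is fixed by $\varpi'$; I will exhibit an $X$ and an $x$ for which these two prescriptions disagree.

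Concretely, let $X$ be the generator (\ref{X generator}) and set $X'=2\pi X$, so that $\exp(X')=\phi_{2\pi}=e$ and $\tau_{x,X'}(t)=\phi_{2\pi t}(x)$ is the circle of latitude through $x$. Writing $x=(x_1,x_2,x_3)\in S^2$, the computation of Example 1 gives $\vec v_X=(0,0,-1)$ and hence $h_X(x)=-x_3$, so by linearity of the comoment map the left-hand side of the necessary condition is \[ \exp\left(2\pi i\mu_{X'}^{\Xi}(x)\right)=\exp\left(2\pi i\cdot\tfrac{1}{4\pi}h_{X'}(x)\right)=\exp\left(\pi i\,h_X(x)\right)=\exp(-\pi i x_3). \] For the right-hand side, the loop $\tau_{x,X'}$ bounds the spherical cap $D$ above height $x_3$, of area $2\pi(1-x_3)$, so Proposition \ref{HolonomyInt} gives \[ \mathrm{Hol}^{\Xi}(\tau_{x,X'})=\exp\left(2\pi i\int_D\tfrac{1}{4\pi}\mathrm{vol}_g\right)=\exp\left(i\pi(1-x_3)\right)=-\exp(-\pi i x_3). \]

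Thus the two sides differ by the constant factor $-1$ for every $x\in S^2$; taking, say, $x$ on the equator ($x_3=0$) the necessary condition would read $1=-1$. This contradiction shows that no such $(P,\Xi)$ exists, proving the proposition. The only delicate point is the orientation and cap bookkeeping in the Gauss--Bonnet step, but the resulting value is robust: since $\int_{S^2}\frac{1}{4\pi}\mathrm{vol}_g=1\in\mathbb{Z}$, the two choices of bounding cap give holonomies differing by $\exp(2\pi i)=1$, so $\mathrm{Hol}^{\Xi}(\tau_{x,X'})$ is unambiguous. As a sanity check, the same computation applied to the genuinely prequantizable form $\varpi=\frac{1}{2\pi}\mathrm{vol}_g+\frac{1}{2\pi}h$ yields $\exp(-2\pi i x_3)$ on both sides, the analogous discrepancy factor now being $\exp(2\pi i)=1$, consistent with its prequantizability.
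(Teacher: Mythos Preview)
Your argument is correct and uses the same necessary condition from Proposition~\ref{InfHolonomy} as the paper, but the paper makes a cleaner choice of test point. Instead of a generic $x$ (or a point on the equator), the paper takes $x=(0,0,1)$, the north pole, which is \emph{fixed} by the rotation; then $\tau_{x,Y}$ is the constant curve and $\mathrm{Hol}^{\Xi}(\tau_{x,Y})=1$ automatically, with no Gauss--Bonnet computation needed. The contradiction then reduces to $\mu'_{Y}(x)=\tfrac{1}{2}\notin\mathbb{Z}$. Your route via the spherical-cap area works and is a nice illustration that the discrepancy factor $-1$ is independent of $x$ (and your sanity check against $\varpi$ is a good touch), but the paper's fixed-point trick bypasses the orientation and area bookkeeping entirely.
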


\begin{proof}
We consider the point $x=(0,0,1)$, and the vector $Y=-2\pi X\in\mathfrak{so}%
(3)$, where $X$ is the infinitesimal generator of rotations around the
z-axis of equation (\ref{X generator}). If $\varpi^{\prime}=\omega^{\prime}+%
\mu^{\prime}$, we have $\exp(Y)=e$, $\mu_{Y}^{\prime}(x)=\frac{1}{4\pi}%
h_{Y}(x)=\frac{1}{2}$ and the curve $\tau_{x,Y}$\ is a constant curve with
value $x$. Hence if $(P,\Xi)$ is any $U(1)$-bundle with connection such that 
$\mathrm{curv}(\Xi)=\omega^{\prime}$ we have $1=\mathrm{Hol}^{\Xi}(\tau
_{x,Y})\neq\exp(2\pi i\mu_{Y}^{\prime}(x))=\exp(\pi i)=-1$.
\end{proof}

\bigskip

\section{Anomalies and equivariant holonomy\label{anomalies}}

In this section we study the application of the results of the present paper
to the study of anomalies in quantum field theory. As commented in the
Introduction this was our original motivation to study equivariant holonomy.
Let $\mathcal{M}_{M}$ be the space of Riemannian metrics on $M$ and $%
\mathcal{D}_{M}$ the group of orientation preserving diffeomorphisms.
Anomalies appear when a classical symmetry of a theory is broken at the
quantum level. This happens for example in theories with chiral Dirac
operators, where the path integral $Z\in \Omega ^{0}(\mathcal{M}_{M})$
(defined as a regularized determinant) fails to be $\mathcal{D}_{M}$%
-invariant. In this case, for $g\in \mathcal{M}_{M}$ and $\phi \in \mathcal{D%
}_{M}$ we have $Z(\phi (g))=Z(g)\cdot \theta (\phi ,g)$ , where $\theta
\colon \mathcal{D}_{M}\times \mathcal{M}_{M}\rightarrow U(1)$ satisfies the
cocycle condition. Hence $\theta $ defines a $\mathcal{D}_{M}$-equivariant $%
U(1)$-bundle $P\rightarrow \mathcal{M}_{M}$. This bundle is called the
anomaly bundle and admits a $\mathcal{D}_{M}$-invariant connection $\Xi $
(e.g. see \cite{freed}). If the gravitational anomaly cancels, the anomaly
bundle admits a $\mathcal{D}_{M}$-invariant section (e.g. see \cite%
{AnomaliesG}) and $P\rightarrow \mathcal{M}_{M}$ is a trivial $\mathcal{D}%
_{M}$-equivariant $U(1)$-bundle. A topological\ obstruction for anomaly
cancellation can be obtained by considering the quotient bundle$\footnote{%
In order to have a well defined quotient manifold it is necesary to restrict
the group $\mathcal{D}_{M}$\ to a subgroup acting freely on $\mathcal{M}_{M}$%
, but we omit this point here. Furthermore, one of the advantages of working
with equivariant holonomy is that this restriction is unnecessary.}$ $P/%
\mathcal{D}_{M}\rightarrow \mathcal{M}_{M}/\mathcal{D}_{M}$. If this
quotient bundle is non-trivial, then the anomaly cannot be cancelled. Hence
the Chern class of $P/\mathcal{D}_{M}$ represents an obstruction for anomaly
cancellation. This allows us to interpret the gravitational anomaly as a
cohomology class on $\mathcal{M}_{M}/\mathcal{D}_{M}$. Furthermore, the
principal $\mathcal{D}_{M}$-bundle $\mathcal{M}_{M}\rightarrow \mathcal{M}%
_{M}/\mathcal{D}_{M}$ admits a natural connection $\Theta $. By Proposition %
\ref{quotcha} the connections $\Xi $ and $\Theta $ determine a connection $%
\underline{\Xi }_{\Theta }$ on $P/\mathcal{D}_{M}\rightarrow \mathcal{M}_{M}/%
\mathcal{D}_{M}$. The curvature of $\underline{\Xi }_{\Theta }$ can be
computed using the Atiyah-Singer theorem. Moreover, in \cite{WittenGlobAn}
Witten introduces a formula that measures the variation of the path integral 
$Z$ along a curve $\gamma \colon I\rightarrow \mathcal{M}_{M}$ such that $%
\gamma (1)=\phi (\gamma (0))$, i.e., he defines a map $w\colon \mathcal{C}%
^{\phi }(\mathcal{M}_{M})\rightarrow U(1)$. Later Witten's formula was
interpreted (e.g. see \cite{freed}) as a computation of the holonomy of the
connection $\underline{\Xi }_{\Theta }$. By Proposition \ref{quotient} $w$
can also be considered as a computation of the $\mathcal{D}_{M}$-equivariant
holonomy $\Xi $, and this result is applied in \cite{LocUni}. Furthermore,
Theorem \ref{anomaly} provides necessary and sufficient conditions for
anomaly cancellation.\ 

If the equivariant curvature of $\Xi $ vanishes (this happens for example if 
$\dim M\neq 2\func{mod}4$), then $\Xi $ is a $\mathcal{D}_{M}$-flat
connection and by the results of Section \ref{flat}\ the equivariant
holonomy of $\Xi $ defines a homomorphism $w\colon \pi _{1,G}(\mathcal{M}%
_{M})\rightarrow U(1)$. As $\mathcal{M}_{M}$ is simply connected we conclude
from the exact sequence (\ref{exact}) that $\pi _{1,G}(\mathcal{M}%
_{M})\simeq \pi _{0}(\mathcal{D}_{M})$ is the mapping class group of $M$.
Hence we obtain a homomorphism $w\colon \pi _{0}(\mathcal{D}_{M})\rightarrow
U(1)$, that in quantum field theory is called a global gravitational anomaly.

The preceding geometrical interpretation of anomalies is insufficient from
the physical point of view due to the problem of locality (e.g. see \cite%
{ASZ}, \cite{AnomaliesG}). In quantum field theory, the gravitational
anomalies can be cancelled only using local terms, i.e. terms obtained by
integration over $M$ of forms depending on the metric and its derivatives.
Geometrically this implies that to cancel gravitational anomalies the
existence of an especial type of $\mathcal{D}_{M}$-equivariant section of
the anomaly bundle $P\rightarrow $\ $\mathcal{M}_{M}$ is necessary (see \cite%
{AnomaliesG}). The principal problem in the study of locality in anomaly
cancellation is that the connection $\Theta $ contains non-local terms and
hence it is difficult to deal with the locality problem using the quotient
bundle $P/\mathcal{D}_{M}\rightarrow \mathcal{M}_{M}/\mathcal{D}_{M}$ and
the connection $\underline{\Xi }_{\Theta }$. However, the $\mathcal{D}_{M}$%
-equivariant curvature and holonomy of $\Xi $ have local expressions (see 
\cite{anomalies}, \cite{AnomaliesG}, \cite{LocUni}), and Theorem \ref%
{anomaly} can be extended to characterize gravitational anomaly cancellation
in a way compatible with locality (see \cite[Proposition 20]{LocUni}).

Finally we note that in the case of gravitational anomalies the space of
fields $\mathcal{M}_{M}$ is contractible and in this case the study of
equivariant holonomy can be simplified (e.g. see \cite{AnomaliesG}).
However, if we consider other theories (for example sigma models or string
theory), the space of fields can be not contractible. In those cases, the
study of anomaly cancellation requires the equivariant holonomy of
connections on arbitrary bundles that we consider in this paper.

\end{document}